\documentclass[12pt]{article}
\usepackage{amsmath,amssymb,amsthm}
\numberwithin{equation}{section}

\textwidth=135mm

\newcommand{\g}{\mathfrak{g}}
\newcommand{\pe}{\mathfrak{p}}
\newcommand{\mr}{\mathfrak{r}}
\newcommand{\bq}{\mathfrak{b}(q)}

\newcommand{\mN}{\mathfrak{N}}
\newcommand{\mo}{\mathfrak{o}}

\newcommand{\p}{\varphi}
\newcommand{\gx}{\g^{\times}}
\newcommand{\lw}{L \cap W}
\newcommand{\e}{\varepsilon}
\newcommand{\dl}{\delta}
\newcommand{\ka}{\kappa}
\newcommand{\la}{\lambda}
\newcommand{\om}{\omega}
\newcommand{\mm}{\mu_{2}}
\newcommand{\mmm}{\mu_{3}}

\newcommand{\aw}{A^{+}(W)}
\newcommand{\awc}{A^{+}(W)^{\circ}}

\newcommand{\am}{A^{+}(M)}
\newcommand{\amd}{A^{+}(M)\widetilde{\,}}
\newcommand{\alw}{A^{+}(L \cap W)}
\newcommand{\alwd}{A^{+}(L \cap W)\widetilde{\,}}
\newcommand{\gf}{\g_{4}^{1}}
\newcommand{\gt}{\g_{3}^{1}}

\newcommand{\gb}{g_{2}}

\newcommand{\gbp}{g_{3}^{\prime}}
\newcommand{\ib}{1_{B}}

\newcommand{\n}{\nu}
\newcommand{\nc}{\nu^{\circ}}
\newcommand{\sq}{\sqrt{-q}}
\newcommand{\sqd}{\sqrt{\delta}}

\newcommand{\tp}[1]{{\rlap{$\phantom{#1}$}}^t #1}
\newcommand{\REF}[1]{\eqref{#1}}
\newcommand{\THM}[1]{Theorem \ref{#1}}

\newcommand{\LEM}[1]{Lemma \ref{#1}}

\title{
On the genera of certain integral lattices in ternary quadratic spaces 
} 
\author{Manabu Murata} 
\date{}
\pagestyle{plain}

\theoremstyle{definition}
\newtheorem{sect}{}[section]

\theoremstyle{plain}
\newtheorem{thm}[sect]{Theorem}
\newtheorem{lem}[sect]{Lemma}

\newtheorem{cor}[sect]{Corollary}

\newtheorem*{theorem*}{Theorem}

%%%%%%    TEXT START    %%%%%%
\begin{document}
\maketitle
\begin{abstract}
This paper treats certain integral lattices with respect to ternary quadratic forms, 
which are obtained from the data of a non-zero element and 
a maximal lattice in a quaternary quadratic space. 
Such a lattice can be described by means of an order associated with 
the lattice in the even Clifford algebra of the ternary form. 
This provides a correspondence between the genus of the lattice and that of the order. 
\end{abstract}

%\footnote{\textit{2010 Mathematics Subject Classification}: 
%11E20,\ 11E88,\ 11E41}

%%Introduction. 
\section*{Introduction}

Let $(V,\, \p)$ be a quadratic space 
consisting of a vector space $V$ of dimension $4$ over an algebraic number field $F$ and 
a nondegenerate symmetric $F$-bilinear form $\p$ on $V$, 
and $L$ a maximal lattice in $V$ with respect to $\p$. 
Put $\p[x] = \p(x,\, x)$ for $x \in V$. 
For an element $h$ of $V$ such that $\p[h] \ne 0$, 
we put 
\begin{gather*}
W = (Fh)^{\perp} = \{x \in V \mid \p(x,\, h) = 0\} 
\end{gather*}
and consider a quadratic space $(W,\, \psi)$ of dimension $3$ over $F$ 
with the restriction $\psi$ of $\p$ to $W$. 
Then we have an integral $\g$-lattice $L \cap W$ in $W$ with respect to $\psi$. 
Here $\g$ is the ring of all integers in $F$ and 
the terms \textit{integral} and \textit{maximal} are given as follows: 
Given $\g$-lattice $L$ in $V$, 
we call it \textit{integral} with respect to $\p$ if $\p[L] \subset \g$; 
notice that $L \subset \widetilde{L}$ if $L$ is integral, 
where $\widetilde{L} = L\,\widetilde{\,} = \{ x \in V \mid 2\p(x,\, L) \subset \g \}$; 
by a \textit{$\g$-maximal}, or simply \textit{maximal}, lattice $L$ with respect to $\p$, 
we understand a $\g$-lattice $L$ in $V$ 
which is maximal among integral lattices with respect to $\p$. 
To $\lw$ above we can associate an order $\alw$ in the even Clifford algebra $A^{+}(W)$ of $\psi$ 
as will be defined below. 
The purpose of this paper is to describe the lattice $\lw$ by means of the order $\alw$. 

To explain more precisely, 
let us recall some basic facts on ternary quadratic spaces (for details see {\S}1.2); 
$A^{+}(W)$ is a quaternion algebra over $F$, 
its canonical involution $*$ gives the main involution of $A^{+}(W)$ as a quaternion algebra, 
and the even Clifford group $G^{+}(W)$ of $\psi$ consists of all invertible elements of $A^{+}(W)$. 
Moreover $(W,\, \psi)$ is isomorphic to $(A^{+}(W)^{\circ},\, d\nu^{\circ})$ with some $d \in F^{\times}$, 
where $\nu$ is the norm form of $A^{+}(W)$, 
$A^{+}(W)^{\circ} =\{x \in A^{+}(W) \mid x^{*} = -x\}$, 
and $\nu^{\circ}$ is the restriction of $\nu$ to $A^{+}(W)^{\circ}$. 
Under such an isomorphism, 
the special orthogonal group 
$SO^{\psi}=\{ \gamma \in O^{\psi} \mid \det(\gamma) = 1 \}$ 
of $\psi$ can be identified with 
the set of all the mappings $x \mapsto \alpha^{-1}x\alpha$ for $\alpha \in A^{+}(W)^{\times}$, 
where $O^{\psi}=\{ \gamma \in GL(W) \mid \psi(x\gamma,\, y\gamma) = \psi(x,\, y) 
	\text{ for every } x,\, y\in W\}$. 
Given an integral lattice $N$ in $(W,\, \psi)$, 
let $A(N)$ be the order in the Clifford algebra $A(W)$ of $\psi$ generated by $\g$ and $N$, 
and set $A^{+}(N) = \aw \cap A(N)$. 
Then $A^{+}(N)$ is an order in $\aw$. 

Now, 
fix an isomorphism $\xi$ of $(W,\ \psi)$ onto 
$(A^{+}(W)^{\circ},\ d\nu^{\circ})$ with $d \in F^{\times}$ 
and put $d^{-1}\g = \mathfrak{a}\mathfrak{r}^{2}$ with 
a squarefree integral ideal $\mathfrak{a}$ and a $\g$-ideal $\mathfrak{r}$ of $F$. 
Then we shall show in \THM{th1} that 
\begin{gather}
(L \cap W)\xi = \mathfrak{r}\mathfrak{c}D_{\psi}[M/L \cap W]\cdot
	\{A^{+}(L \cap W)\widetilde{\,} \cap A^{+}(W)^{\circ}\}. \label{rec0}
\end{gather}
Here $\mathfrak{c}$ is the product of the prime ideals of $\mathfrak{a}$ 
which are prime to the discriminant $D_{\psi}$ of $A^{+}(W)$, 
$[M/L \cap W]$ is the integral ideal in $F$ given in \REF{42} 
with a maximal lattice $M$ in $(W,\, \psi)$, 
and $A^{+}(L \cap W)\widetilde{\,} = \{x \in A^{+}(W) \mid 2\nu(x,\, \alw) \subset \g\}$. 

From the description of $\lw$ in \REF{rec0} we have the following results: 
\begin{enumerate}
\item The $SO^{\psi}$-genus of $L \cap W$ is determined by the genus of $\mathfrak{o}$ 
	which is defined by the set 
	$\{\alpha^{-1}\mathfrak{o}\alpha \mid \alpha \in A^{+}(W)^{\times}_{\mathbf{A}}\}$. 
	Here $\mathfrak{o} = \alw$ and the subscript $\mathbf{A}$ means adelization. 
\item $C(L \cap W) = \tau(T(\mathfrak{o}))$. 
	Here $C(L \cap W) = \{\gamma \in SO^{\psi}_{\mathbf{A}} \mid (L \cap W)\gamma = L \cap W\}$, 
	$T(\mathfrak{o}) = \{\alpha \in G^{+}(W)_{\mathbf{A}} \mid \alpha\mathfrak{o} = \mathfrak{o}\alpha\}$, 
	and $\tau$ is a surjective homomorphism of $G^{+}(W)_{\mathbf{A}}$ onto $SO^{\psi}_{\mathbf{A}}$ 
	defined in \REF{tau}. 
\item The map $N \mapsto A^{+}(N)$ gives a bijection of the set of $SO^{\psi}$-classes 
	in the $SO^{\psi}$-genus of $\lw$ onto the set of isomorphism classes in the genus of $\mathfrak{o}$. 
\end{enumerate}
In addition it is noted that $\mathfrak{o}$ has discriminant $\p[h][\widetilde{L}/L](2\p(h,\, L))^{-2}$, 
which is not squarefree in general. 
Here $[\widetilde{L}/L]$ is the discriminant ideal of $\p$ defined in Section 1.1 of the text. 

All results in the present paper concern the integral lattices $\lw$, 
which are obtained from various $(V,\, \p)$, $L$, and $h$. 
Though our investigation does not treat \textit{all} integral lattices in $(W,\, \psi)$, 
such lattices and the associated orders can be applied to the study of 
quadratic Diophantine equations in four variables, 
as was shown in \cite{pr} and \cite{qeo}. 
In fact, 
our results complement \cite[Lemma 2.1]{pr}, \cite[Lemma 2.1,\ and\ Theorem 2.2]{qeo} 
that played fundamental roles in the proofs of the main results in those papers. 
\\ 

\textit{Notation}. 
We denote by $\mathbf{Z}$ and $\mathbf{Q}$ the ring of rational integers and the field of rational numbers, 
respectively. 
If $R$ is an associative ring with identity element and if $M$ is an $R$-module, 
then we write $R^{\times}$ for the group of all invertible elements of $R$ and 
$M_{n}^{m}$ the $R$-module of $m \times n$-matrices with entries in $M$. 
We set $R^{\times 2} = \{a^{2} \mid a \in R^{\times}\}$. 
For a finite set $X$, 
we denote by $\# X$ the number of elements in $X$. 
We also write $\mathrm{diag}[a_{1},\, \cdots ,\, a_{s}]$ 
for the matrix with square matrices $a_{1},\, \cdots ,\, a_{s}$ in the diagonal blocks 
and $0$ in all other blocks. 

Let $V$ be a vector space over a field $F$ of characteristic $0$, 
and $GL(V)$ the group of all $F$-linear automorphisms of $V$. 
We let $GL(V)$ act on $V$ on the \textit{right}. 

Let $F$ be an algebraic number field (of finite degree) and $\g$ the ring of all algebraic integers in $F$. 
For a fractional ideal in $F$ we often call it a $\g$-ideal. 
Let $\mathbf{a}$, $\mathbf{h}$, and $\mathbf{r}$ 
be the sets of archimedean primes, nonarchimedean primes, and real archimedean primes of $F$, 
respectively. 
We denote by $F_{v}$ the completion of $F$ at $v \in \mathbf{a} \cup \mathbf{h}$. 
We often identify $v$ with the prime ideal of $F$ corresponding to $v \in \mathbf{h}$. 
For $v \in \mathbf{h}$, 
we denote by $\g_{v}$, $\mathfrak{p}_{v}$, and $\pi_{v}$ 
the maximal order of $F_{v}$, the prime ideal in $F_{v}$, and a prime element of $F_{v}$, 
respectively. 
If $K$ is a quadratic extension of $F$, 
we denote by $D_{K/F}$ the relative discriminant of $K$ over $F$. 

By a \textit{$\g$-lattice} $L$ in a vector space $V$ over a number field or nonarchimedean local field $F$, 
we mean a finitely generated $\g$-submodule in $V$ containing a basis of $V$. 
By an \textit{order} in a quaternion algebra $B$ over $F$ 
we mean a subring of $B$ containing $\g$ that is a $\g$-lattice in $B$. 
For a symmetric $F$-bilinear form $\p$ on $V$ and two subspaces $X$ and $Y$ of $V$, 
we denote by $X \oplus Y$ the direct sum of $X$ and $Y$ 
if $\p(x,\, y) = 0$ for every $x \in X$ and $y \in Y$; 
we also denote by $\p|_{X}$ the restriction of $\p$ to $X$. 
When $X$ is an object defined over a number field $F$, 
we often denote by $X_{v}$ the localization at a prime $v$ if it is meaningful. 

\section{Quaternary and ternary quadratic spaces}

\subsection{Preliminaries for quadratic spaces}
Throughout the paper we shall use the same notation and terminology as in \cite{qeo}, 
which are following the textbook by Shimura \cite{04}. 
We begin by introducing the invariants and the discriminant ideal of a quadratic space $(V,\, \p)$ 
over an algebraic number field or its completion $F$. 
We often call the former $F$ a global field and 
the latter a local field when it is nonarchimedean. 

By the \textit{invariants} of $(V,\, \p)$ over a global field $F$, 
we understand a set of data 
\begin{gather}
\{n,\ F(\sqrt{\delta}),\ Q(\p),\ \{s_{v}(\p)\}_{v \in \mathbf{r}}\}, \label{inv}
\end{gather}
where $n$ is the dimension of $V$, 
$F(\sqrt{\delta})$ is the \textit{discriminant field} of $\p$ with $\delta = (-1)^{n(n-1)/2}\det(\p)$, 
$Q(\p)$ is the \textit{characteristic quaternion algebra} of $\p$, 
and $s_{v}(\p)$ is the \textit{index} of $\p$ at $v \in \mathbf{r}$. 
For these definitions, 
the reader is referred to \cite[{\S}1.1,\ {\S}3.1,\ and\ {\S}4.1]{cq}. 
By virtue of \cite[Theorem 4.2]{cq}, 
the isomorphism class of $(V,\, \p)$ is determined by 
$\{n,\, F(\sqrt{\delta}),\, Q(\p),\, \{s_{v}(\p)\}_{v \in \mathbf{r}}\}$ 
and vice versa. 

The characteristic algebra $Q(\p_{v})$ is also defined for 
$\p_{v}$ at $v \in \mathbf{a} \cup \mathbf{h}$ (cf.\ \cite[{\S}3.1]{cq}). 
Here we put $V_{v} = V\otimes_{F}F_{v}$ and 
$\p_{v}$ denotes the $F_{v}$-bilinear extension of $\p$ to $V_{v}$. 
If $v \in \mathbf{h}$, by \cite[Lemma 3.3]{cq}, 
the isomorphism class of $(V_{v},\, \p_{v})$ is determined by 
$\{n,\, F_{v}(\sqrt{\delta}),\, Q(\p_{v})\}$. 
As for $v \in \mathbf{a}$, 
it is determined by $\{n,\, s_{v}(\p)\}$ if $v \in \mathbf{r}$, 
and by the dimension $n$ if $v \not\in \mathbf{r}$. 
If $v \in \mathbf{r}$, 
then $Q(\p_{v})$ is given by \cite[(4.2a)\ and\ (4.2b)]{cq}, for example. 
If $v \not\in \mathbf{r}$, 
then $Q(\p_{v}) = M_{2}(\mathbf{C})$, 
where $\mathbf{C}$ is the field of complex numbers. 
\\

Let $SO^{\p}(V)_{\mathbf{A}}$ be the adelization of 
$SO^{\p}(V)$ in the usual sense (cf.\ \cite[{\S}9.6]{04}). 
For $\alpha \in SO^{\p}(V)_{\mathbf{A}}$ and 
a $\g$-lattice $L$ in $V$, 
we denote by $L\alpha$ the $\g$-lattice in $V$ whose localization 
is given by $L_{v}\alpha_{v}$ for every $v \in \mathbf{h}$. 
We put 
\begin{gather}
C(L) = \{ \alpha \in SO^{\p}(V)_{\mathbf{A}} \mid L\alpha = L \}, \quad 
C(L_{v}) = SO^{\p_{v}}(V_{v}) \cap C(L),\quad (v \in \mathbf{h}) \nonumber \\ 
\Gamma(L) = SO^{\p}(V) \cap C(L). \label{stbl} 
\end{gather}
Then the map $\alpha \mapsto L\alpha^{-1}$ gives a bijection of 
$SO^{\p}\setminus SO^{\p}_{\mathbf{A}}/C(L)$ onto 
$\{L\alpha \mid \alpha \in SO^{\p}_{\mathbf{A}}\}/SO^{\p}$. 
We call $\{L\alpha \mid \alpha \in SO^{\p}_{\mathbf{A}}\}$ the $SO^{\p}$-\textit{genus} of $L$, 
$\{L\gamma \mid \gamma \in SO^{\p}\}$ the $SO^{\p}$-\textit{class} of $L$, 
and $\#\{SO^{\p}\setminus SO^{\p}_{\mathbf{A}}/C(L)\}$ 
the \textit{class number} of $SO^{\p}$ relative to $C(L)$ 
or the class number of the genus of $L$ with respect to $SO^{\p}$. 
It is known that 
all $\g$-maximal lattices in $V$ with respect to $\p$ form a single $SO^{\p}$-genus. 
Let $A^{+}(V)^{\times}_{\mathbf{A}}$ (resp.\ $G^{+}(V)_{\mathbf{A}}$) 
be the adelization of $A^{+}(V)^{\times}$ (resp.\ $G^{+}(V)$). 
We define a homomorphism $\tau$ of $G^{+}(V)$ onto $SO^{\p}$ by 
\begin{gather}
x\tau(\alpha) = \alpha^{-1}x\alpha \qquad \text{for $x \in V$ and $\alpha\in G^{+}(V)$}. \label{tau}
\end{gather}
This can be extended to a homomorphism of $G^{+}(V)_{\mathbf{A}}$ onto $SO^{\p}_{\mathbf{A}}$ 
(see \cite[{\S}9.10]{04} for details). 
We denote it by the same symbol $\tau$. 

For two $\g$-lattices $L$ and $M$ in $V$ over a global or local field $F$, 
we denote by $[L/M]$ a $\g$-ideal of $F$ generated over $\g$ by $\det(\alpha)$ 
of all $F$-linear automorphisms $\alpha$ of $V$ such that $L\alpha \subset M$. 
If $F$ is a global field, 
then $[L/M] = \prod_{v \in \mathbf{h}} [L_{v}/M_{v}]$ 
with the localization $[L/M]_{v} = [L_{v}/M_{v}]$ at each $v$. 
Following \cite[{\S}6.1]{cq}, 
in both global and local $F$, 
we call $[\widetilde{L}/L]$ the \textit{discriminant ideal} of $(V, \p)$ 
if $L$ is a $\g$-maximal lattice in $V$ with respect to $\p$. 
This is independent of the choice of $L$. 
If $F$ is a local field, 
the discriminant ideal of $\p$ coincides with that of a core subspace of $\p$. 
\\

For a $\g$-lattice $L$ in $V$, 
$q \in F$, 
and a $\g$-ideal $\mathfrak{b}$ of $F$, 
we put 
\begin{gather*}
L[q] = \{x \in L \mid \p[x] = q\}, \quad 
L[q,\ \mathfrak{b}] = \{ x \in V \mid \p[x] = q,\ \p(x,\, L) = \mathfrak{b} \}. 
\end{gather*}
Here $\p(x,\, L) = \{\p(x,\, y) \mid y \in L\}$, 
which becomes a $\g$-ideal of $F$. 
Suppose $F$ is a nonarchimedean local field. 
Let $V$ have dimension $n > 2$ and $L$ be a $\g$-maximal lattice in $V$ with respect to $\p$. 
Then \cite[Theorem\ 10.5]{04} due to Shimura shows that 
\begin{gather}
L[q,\ \mathfrak{b}] = hC(L), \label{th13}
\end{gather}
provided $h \in L[q,\, \mathfrak{b}]$ (cf.\ also\ \cite[Theorem 1.3]{6b}). 
In the same setting, 
Yoshinaga \cite[Theorem 3.5]{y} gives an element of $L[q,\, \mathfrak{b}]$ 
in terms of a Witt decomposition of $(V,\, \p)$ for every set $L[q,\, \mathfrak{b}]$ contained in $L$. 
In later arguments we will employ such elements by means of \REF{th13}. 
\\

Let us introduce some symbols which will be used throughout the paper. 
Let $F$ be a global or local field. 
For a quadratic extension field $K$ of $F$, 
we put $2\kappa(x,\, y) = xy^{\rho} + x^{\rho}y$ for $x,\, y \in K$ 
with a nontrivial automorphism $\rho$ of $K$ over $F$. 
Put also $\kappa[x] = \kappa(x,\, x)$, 
which is the norm $N_{K/F}(x)$ of $x$. 
For a quaternion algebra $B$ over $F$, 
we put $2\beta(x,\, y) = xy^{\iota} + yx^{\iota}$ for $x,\, y \in B$ 
with the main involution $\iota$ of $B$. 
Then the reduced norm $N_{B/F}(x)$ of $x$ is given by $\beta[x] = \beta(x,\, x)$; 
the reduced trace is $\mathrm{Tr}_{B/F}(x)=2\beta(x,\, 1)$. 
We denote by $D_{B}$ the discriminant of $B$. 
Every quaternion algebra $B$ over $F$ can be given by 
\begin{gather}
K + K\omega, \quad \omega^{2} = c, \quad x\omega = \omega x^{\iota} \quad \text{for every $x \in K$} 
\label{om}
\end{gather}
with a quadratic extension field $K$ of $F$, 
$\om \in B^{\times}$, 
and $c \in F^{\times}$ (cf.\ \cite[{\S}1.10]{04}). 
We denote it by $\{K,\, c\}$. 
Then $\{K,\, c\} = M_{2}(F)$ if and only if $c \in \kappa[K^{\times}]$. 
In particular when $F$ is local, 
a division quaternion algebra over $F$ is isomorphic to $\{K,\, c\}$ 
with an arbitrarily fixed element $c \in F^{\times}$ such that $c \not\in \kappa[K^{\times}]$. 
This is because there is a unique division quaternion algebra over $F$ up to isomorphisms; 
see \cite[Theorem 5.14]{04}, for example. 
We set 
\begin{gather}
B^{\circ} = \{x \in B \mid x^{\iota} = -x\},\quad 
	\beta^{\circ} = \beta |_{B^{\circ}}. \label{bc}
\end{gather}

For an order $\mathfrak{o}$ in $B$ it is known that 
$[\widetilde{\mathfrak{o}}/\mathfrak{o}] = d(\mathfrak{o})^{2}$ 
with an integral ideal $d(\mathfrak{o})$ in $F$. 
Here $\widetilde{\mathfrak{o}}=\{ x \in B \mid \mathrm{Tr}_{B/F}(x\mathfrak{o}) \subset \g \}$. 
The ideal $d(\mathfrak{o})$ is called the \textit{discriminant} of $\mathfrak{o}$. 
If $F$ is a number field and $\mathfrak{o}$ is a maximal order, 
then $d(\mathfrak{o})$ is the product of all prime ideals ramified in $B$, 
that is, 
the discriminant of $B$. 
When $F$ is global, 
we set 
\begin{gather}
T(\mathfrak{o}) = \{\alpha \in B^{\times}_{\mathbf{A}} \mid 
	\alpha\mathfrak{o} = \mathfrak{o}\alpha\}, \quad 
T(\mathfrak{o}_{v}) = B^{\times}_{v} \cap T(\mathfrak{o}) 
\quad (v \in \mathbf{a}\cup\mathbf{h}), \nonumber \\ 
\Gamma^{*}(\mathfrak{o}) = B^{\times} \cap T(\mathfrak{o}), \label{nml} 
\end{gather}
where $B^{\times}_{\mathbf{A}}$ is the adelization of $B^{\times}$, 
$B_{v} = B\otimes_{F}F_{v}$, 
and $\mo_{v} = \mo\otimes_{\g}\g_{v}$; 
note that $T(\mathfrak{o}_{v}) = B^{\times}_{v}$ if $v \in \mathbf{a}$. 
Then $\#\{T(\mo)\setminus B^{\times}_{\mathbf{A}}/B^{\times}\}$ 
is called the \textit{type number} of $\mo$. 
If $U = B^{\times}_{\mathbf{a}}\prod_{v \in \mathbf{h}}\mo_{v}^{\times}$ in $B^{\times}_{\mathbf{A}}$, 
then $\#\{U\setminus B^{\times}_{\mathbf{A}}/B^{\times}\}$ is called the \textit{class number} of $\mo$. 

\subsection{Ternary quadratic spaces}

We recall some basic facts on three-dimensional quadratic spaces $(W,\, \psi)$ 
over a number field or its completion $F$. 
The characteristic algebra $Q(\psi)$ is given by $A^{+}(W)$ by definition. 
The core dimension $s_{v}$ of $(W,\, \psi)$ at $v \in \mathbf{h}$ is determined by 
\begin{gather}
s_{v} = 
\begin{cases}
1 & \text{if $Q(\psi_{v}) = M_{2}(F_{v})$}, \\
3 & \text{if $Q(\psi_{v})$ is a division algebra}. 
\end{cases} \label{cd3}
\end{gather}
This can be seen from \cite[{\S}3.2]{cq} and the proof of \cite[Lemma 3.3]{cq}. 

There are isomorphisms of $(W,\, \psi)$ onto $(A^{+}(W)^{\circ},\, d\nu^{\circ})$ with $d \in F^{\times}$. 
Let us explain it, 
following \cite[{\S}7.3]{04}. 
Take an orthogonal basis $\{k_{1},\, k_{2},\, k_{3}\}$ of $W$ with respect to $\psi$; 
namely, 
an $F$-basis $\{k_{i}\}_{i=1}^{3}$ of $W$ such that $\psi(k_{i},\, k_{j}) = 0$ for $i \ne j$. 
Under the identification of $W$ with the corresponding subspace in the Clifford algebra $A(W)$, 
put $\xi = k_{1}k_{2}k_{3} \in A(W)^{\times}$; 
then $F + F\xi$ is the center of $A(W)$. 
We see that $A^{+}(W) = F + Fk_{1}k_{2} + Fk_{1}k_{3} + Fk_{2}k_{3}$ 
and $W\xi = Fk_{1}k_{2} + Fk_{1}k_{3} + Fk_{2}k_{3}$. 
By \cite[Theorem 2.8(ii)]{04}, 
$A^{+}(W)$ is a quaternion algebra over $F$; 
the main involution coincides with the canonical involution $*$ restricted to $A^{+}(W)$. 
Then the map $x \mapsto x\xi$ gives an $F$-linear isomorphism of $W$ onto $A^{+}(W)^{\circ}$ 
such that $(x\xi)(x\xi)^{*} = \xi \xi^{*}\psi[x]$ for $x \in W$. 
Putting $\nu[y] = yy^{*}$ for $y \in A^{+}(W)$, 
we have an isomorphism 
\begin{gather}
(W,\, \psi) \cong (A^{+}(W)^{\circ},\, (\xi \xi^{*})^{-1}\nu^{\circ}) \quad 
\text{via } x \longmapsto x\xi. \label{is3}
\end{gather}
Notice that $\xi \xi^{*} = \psi[k_{1}]\psi[k_{2}]\psi[k_{3}] \in F^{\times}$ 
and so $F(\sqrt{-\xi \xi^{*}})$ is the discriminant field of $\psi$. 
For the sake of simplicity, 
we often denote by $\xi$ such an isomorphism in \REF{is3}. 
Let $\{g_{i}\}$ be another orthogonal basis of $(W,\, \psi)$ and put $\zeta = g_{1}g_{2}g_{3}$. 
Since $F\xi = F\zeta$ by \cite[Lemma 2.5\ (iii)]{04}, 
the isomorphism defined in \REF{is3} with $\zeta$ is given by 
$x \mapsto cx\xi$ with some $c \in F^{\times}$. 

Let $G^{+}(W)$ be the even Clifford group of $(W,\, \psi)$ 
and $\tau$ the homomorphism defined in \REF{tau}. 
By the definition of $A^{+}(W)^{\circ}$, 
$\alpha^{-1}A^{+}(W)^{\circ}\alpha = A^{+}(W)^{\circ}$ for $\alpha \in A^{+}(W)^{\times}$. 
Hence we have $G^{+}(W) = A^{+}(W)^{\times}$. 
Moreover, 
under the isomorphism \REF{is3} we can understand that 
\begin{gather*}
x\tau(\alpha) = \alpha^{-1}x\xi\alpha\xi^{-1} 
\end{gather*}
for $x \in W$ and $\alpha \in A^{+}(W)^{\times}$. 

Now, 
the pair $(A^{+}(W),\, \nu)$ can be viewed as a quaternary quadratic space over $F$. 
We note that $\nu(x,\, y) = 2^{-1}\mathrm{Tr}_{A^{+}(W)/F}(xy^{*})$ for $x,\, y \in A^{+}(W)$. 
Assume that $F$ is a global or local field. 
For an integral lattice $N$ in $W$ with respect to $\psi$, 
we consider the order $A^{+}(N)$ in $A^{+}(W)$ as defined in the introduction. 
Its discriminant $d(A^{+}(N))$ is given by  
$[A^{+}(N)\widetilde{\,}/A^{+}(N)] = d(A^{+}(N))^{2}$, 
where $A^{+}(N)\widetilde{\,}=\{x\in A^{+}(W)\mid2\nu(x,\, A^{+}(N))\subset\g\}$. 
\begin{lem} \label{l1}
Let $N$ (resp.\ $M$) be an integral lattice (resp.\ a $\g$-maximal lattice) 
in $W$ with respect to $\psi$. 
Then $[A^{+}(M)\widetilde{\,}/A^{+}(M)]$ and $[A^{+}(M)/A^{+}(N)]$ are independent of the choice of $M$. 
Moreover the following assertions hold: 
\begin{enumerate}
\item $[A^{+}(M)/A^{+}(N)] = [M/N]^{2}$. 
\item $[A^{+}(N)\widetilde{\,}/A^{+}(N)] = (2^{-1}[\widetilde{N}/N])^{2}$. 
\item $d(A^{+}(N)) = 2^{-1}[\widetilde{N}/N]$. 
\end{enumerate}
\end{lem}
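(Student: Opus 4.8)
The plan is to reduce everything to the nonarchimedean local case, since the det-ideals $[\,\cdot/\cdot\,]$, the duals $\widetilde{L}$, and the orders $A^{+}(\cdot)$ all localize and the asserted identities are multiplicative over $v \in \mathbf{h}$. So fix $v$ and drop it from the notation. Over the local ring $\g$ the lattice $N$ is free; choose a $\g$-basis $\{w_{1},w_{2},w_{3}\}$ of $N$. Since $N$ is integral, the Clifford relations $w_{i}w_{i}=\psi[w_{i}]\in\g$ and $w_{i}w_{j}+w_{j}w_{i}=2\psi(w_{i},w_{j})\in\g$ show that $A(N)$ is $\g$-free on the eight monomials in the $w_{i}$, whence
\begin{gather*}
A^{+}(N) = \g + \g\, w_{1}w_{2} + \g\, w_{1}w_{3} + \g\, w_{2}w_{3}.
\end{gather*}
The same description holds for any integral lattice with any $\g$-basis; this is the computational backbone of the whole lemma.

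For (1) I first treat a nested pair $N\subseteq M$. By the elementary divisor theorem there is a $\g$-basis $\{u_{1},u_{2},u_{3}\}$ of $M$ with $w_{i}=\pi^{e_{i}}u_{i}$, so $[M/N]=\pi^{e_{1}+e_{2}+e_{3}}\g$. Comparing the basis $\{1,u_{i}u_{j}\}$ of $A^{+}(M)$ with the basis $\{1,\pi^{e_{i}+e_{j}}u_{i}u_{j}\}$ of $A^{+}(N)$ gives $[A^{+}(M)/A^{+}(N)]=\pi^{2(e_{1}+e_{2}+e_{3})}\g=[M/N]^{2}$; the index doubles because the generators of $A^{+}$ are \emph{products} of lattice vectors. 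For arbitrary integral $P,Q$ I would then pass to a common sublattice $R\subseteq P\cap Q$ and use the multiplicativity $[A^{+}(Q)/A^{+}(P)]=[A^{+}(Q)/A^{+}(R)][A^{+}(R)/A^{+}(P)]$ of det-ideals, reducing to the nested case to conclude $[A^{+}(Q)/A^{+}(P)]=[Q/P]^{2}$ in general. Applied with $M$ maximal this is (1); moreover, since locally all maximal lattices are conjugate under the orthogonal group $O^{\psi}$ and $\det(\gamma)\in\g^{\times}$ forces $[M\gamma/N]=[M/N]$, the ideal $[M/N]$, hence $[A^{+}(M)/A^{+}(N)]$, is independent of the maximal $M$.

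Assertion (3) is the heart of the matter, and I would prove it in the equivalent form $[A^{+}(N)\widetilde{\,}/A^{+}(N)]=(2^{-1}[\widetilde{N}/N])^{2}$, from which $d(A^{+}(N))=2^{-1}[\widetilde{N}/N]$ follows by the definition of the discriminant. Writing $H=(\psi(w_{i},w_{j}))$ for the Gram matrix of $\psi$ on $N$, one has $[\widetilde{N}/N]=\det(2H)\g=8\det(H)\g$. On the other side $[A^{+}(N)\widetilde{\,}/A^{+}(N)]=\det(T)\g$, where $T=(2\nu(b_{i},b_{j}))$ is the Gram matrix of $2\nu$ on $b_{1}=1$, $b_{2}=w_{1}w_{2}$, $b_{3}=w_{1}w_{3}$, $b_{4}=w_{2}w_{3}$. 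Using $2\nu(x,y)=\mathrm{Tr}(xy^{*})$, the scalar-part expansion of a product of four vectors, and the contraction $w_{i}w_{i}w_{j}w_{j}=\psi[w_{i}]\psi[w_{j}]$, every entry of $T$ is an explicit quadratic expression in the $h_{ij}$, and it remains to verify the identity $\det(T)=16\det(H)^{2}$. I would \emph{not} check this entrywise in general, because the dyadic case offers no orthogonal lattice basis; instead I treat it as a polynomial identity in the $h_{ij}$. Under $w_{i}\mapsto\sum_{j}g_{ij}w_{j}$ one has $H\mapsto gHg^{t}$ and $T\mapsto\Lambda T\Lambda^{t}$ with $\det(\Lambda)=\det(g)^{2}$ (the induced map being block triangular with the second exterior power of $g$), so both sides scale by $\det(g)^{4}$; since over $\mathbf{C}$ every symmetric $H$ is congruent to a diagonal one, and in the diagonal case $T=\mathrm{diag}[2,2a_{1}a_{2},2a_{1}a_{3},2a_{2}a_{3}]$ gives $\det(T)=16(a_{1}a_{2}a_{3})^{2}=16\det(H)^{2}$, the identity holds universally, in particular over our local $\g$. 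Applied to $M$ this also yields $d(A^{+}(M))=2^{-1}[\widetilde{M}/M]$, so $[A^{+}(M)\widetilde{\,}/A^{+}(M)]=(2^{-1}[\widetilde{M}/M])^{2}$ equals the discriminant ideal of $\psi$ and is therefore independent of $M$.

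Finally (2) is assembled from the pieces. Its first equality is (3) after squaring. For the second I use the lattice relation $[\widetilde{N}/N]=[M/N]^{2}[\widetilde{M}/M]$, which follows from $H_{N}=DH_{M}D$ with $D=\mathrm{diag}[\pi^{e_{1}},\pi^{e_{2}},\pi^{e_{3}}]$ and $\det(2H_{N})=\det(D)^{2}\det(2H_{M})$. Combining this with (1) and (3) for both $N$ and $M$ gives
\begin{gather*}
(2^{-1}[\widetilde{N}/N])^{2} = [M/N]^{4}(2^{-1}[\widetilde{M}/M])^{2} = [A^{+}(M)/A^{+}(N)]^{2}[A^{+}(M)\widetilde{\,}/A^{+}(M)],
\end{gather*}
as required. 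I expect the main obstacle to be precisely the determinant identity in (3) uniformly across all residue characteristics; the polynomial-identity argument via diagonalization and the $\det(g)^{4}$-scaling is what lets me bypass the absence of an orthogonal lattice basis at the dyadic primes.
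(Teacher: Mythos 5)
Your proof is correct and follows essentially the same route as the paper's: localize, compute $[A^{+}(M_{0})/A^{+}(N)]$ via elementary divisors, and reduce (2)--(3) to the Gram-determinant identity $\det(\nu_{0})=\det(\psi_{0})^{2}$, which the paper simply asserts as a ``straightforward calculation'' and you justify more carefully by congruence-covariance plus diagonalization over $\mathbf{C}$. The only other (inessential) divergence is that you obtain independence of $M$ from the invariance of $[M/N]$ under $SO^{\psi}$, whereas the paper extends the isometry to a norm-preserving ring automorphism of $A^{+}(W)$, and you derive $[\widetilde{N}/N]=[M/N]^{2}[\widetilde{M}/M]$ directly rather than citing it.
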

It should be remarked that we use the same symbol $\widetilde{\ }$ in the sense of 
$\widetilde{N}$ or $A^{+}(N)\widetilde{\,}$ with respect to $\psi$ or $\nu$, respectively. 
We also note that assertion (3) can be verified from Peters \cite[Satz 7]{p}. 
As an application of this lemma it can be seen that 
\textit{if the order $A^{+}(N)$ is maximal in $A^{+}(W)$ for an integral lattice $N$ in $(W,\, \psi)$, 
then $N$ is $\g$-maximal with respect to $\psi$.} 
The converse is not true; 
namely, 
in general, 
$A^{+}(N)$ is not maximal even if $N$ is a maximal lattice. 
\begin{proof}
All assertions in the global case can be obtained from the corresponding local assertions by localization. 
Hence we prove them in the local case. 

Let $M^{\prime}$ be another maximal lattice in $W$. 
Then $M^{\prime}\gamma = M$ with some $\gamma \in SO^{\psi}(W)$. 
By \cite[Lemma 3.8(ii)]{04}, 
$\gamma$ can be extended to an $F$-linear ring-automorphism of $A(W)$. 
Clearly $A^{+}(M^{\prime})\gamma = A^{+}(M^{\prime}\gamma) = A^{+}(M)$. 
It can be seen that $(y\gamma)^{*} = y^{*}\gamma$ for $y \in A(W)$. 
Hence $\nu[y\gamma] = \nu[y]$ 
for every $y \in A^{+}(W)$. 
This implies that the extended $\gamma$ on $A^{+}(W)$ has determinant $\pm 1$. 
Employing this $\gamma$, 
we have $[A^{+}(M)/A^{+}(N)] = [A^{+}(M^{\prime})/A^{+}(N)]$. 
Therefore $[A^{+}(M)/A^{+}(N)]$ is independent of the choice of $M$. 

(1) Let $M_{0}$ be a maximal lattice containing $N$. 
There is a $\g$-basis $\{k_{i}\}_{i=1}^{3}$ of $M_{0}$ such that 
$N = \sum_{i=1}^{3}\g \varepsilon_{i}k_{i}$ with the elementary divisors $\{\varepsilon_{i}\g\}_{i=1}^{3}$. 
Then $[M_{0}/N] = [\g_{3}^{1}/\g_{3}^{1}\varepsilon] = \varepsilon_{1}\varepsilon_{2}\varepsilon_{3}\g$, 
where $\varepsilon = \mathrm{diag}[\varepsilon_{1},\, \varepsilon_{2},\, \varepsilon_{3}]$. 
Furthermore, 
we have 
$[A^{+}(M_{0})/A^{+}(N)] = [\g_{4}^{1}/\g_{4}^{1}\varepsilon^{\prime}] 
	= (\varepsilon_{1}\varepsilon_{2}\varepsilon_{3})^{2}\g = [M_{0}/N]^{2}$, 
where $\varepsilon^{\prime} = \mathrm{diag}[1,\, \varepsilon_{1}\varepsilon_{2},\, 
	\varepsilon_{1}\varepsilon_{3},\, \varepsilon_{2}\varepsilon_{3}]$. 
Because $[A^{+}(M_{0})/A^{+}(N)]$ and $[M_{0}/N]$ are both independent of $M_{0}$, 
we have (1). 

(2) Let $N = \sum_{i=1}^{3}\g e_{i}$. 
With respect to this basis, 
$N$ can be identified with $\g_{3}^{1}$ and 
$\widetilde{N}$ with $\g_{3}^{1}(2\psi_{0})^{-1}$, 
where $\psi_{0}$ is the matrix representing $\psi$ with respect to $\{e_{i}\}_{i=1}^{3}$. 
Then $[\widetilde{N}/N] = [\g_{3}^{1}/\g_{3}^{1}2\psi_{0}] = 2^{3}\det(\psi_{0})\g$. 
On the other hand, 
since $A^{+}(N)$ has a $\g$-basis $\{1,\, e_{i}e_{j}\}_{1\le i<j\le3}$, 
$A^{+}(N)$ can be identified with $\g_{4}^{1}$ and 
$A^{+}(N)\widetilde{\,}$ with $\g_{4}^{1}(2\nu_{0})^{-1}$, 
where $\nu_{0}$ is the matrix representing $\nu$ with respect to that basis. 
Then $[A^{+}(N)\widetilde{\,}/A^{+}(N)] = [\g_{4}^{1}/\g_{4}^{1}2\nu_{0}] = 2^{4}\det(\nu_{0})\g$. 
In view of $e_{i}e_{j} + e_{j}e_{i} = 2\psi(e_{i},\, e_{j})$, 
straightforward calculations show that $\det(\nu_{0}) = \det(\psi_{0})^{2}$. 
This proves (2). 
In particular, 
$[A^{+}(M)\widetilde{\,}/A^{+}(M)] = (2^{-1}[\widetilde{M}/M])^{2}$, 
which does not depend on the choice of maximal $M$. 

(3) Applying \cite[Lemma 2.2(3)]{id} to $A^{+}(N) \subset A^{+}(M_{0})$ with $M_{0}$ in the proof of (1), 
and combining with the results of (1) and (2), 
we have 
\begin{eqnarray*}
[A^{+}(N)\widetilde{\,}/A^{+}(N)] &=& 
	[A^{+}(M_{0})/A^{+}(N)]^{2}[A^{+}(M_{0})\widetilde{\,}/A^{+}(M_{0})] \label{32} \\
	&=& ([M_{0}/N]^{2}\cdot 2^{-1}[\widetilde{M_{0}}/M_{0}])^{2} = (2^{-1}[\widetilde{N}/N])^{2}, \nonumber
\end{eqnarray*}
which proves (3). 
%Replacing $M_{0}$ by arbitrary $M$ in \REF{32}, 
%we have the second equality of (2). 
\end{proof}

We shall here recall several results in \cite[Section 5]{6b}, 
which concern \textit{maximal} lattices with respect to ternary forms. 
Let us restate those facts as follows: 
\begin{thm} \label{shi} $\mathrm{(Shimura)}$ 
Let $(W,\, \psi)$ be a ternary quadratic space over a number field $F$ 
and $\xi$ an isomorphism in \REF{is3} of $W$ onto $\awc$ 
such that $\psi[x] = (\xi\xi^{*})^{-1}\nc[x\xi]$ for $x \in W$. 
Put $\xi\xi^{*}\g = \mathfrak{a}\mr^{2}$ 
with a squarefree integral ideal $\mathfrak{a}$ and a $\g$-ideal $\mr$ of $F$. 
Let $M$ be a maximal lattice in $W$. 
Then the following assertions hold: 
\begin{enumerate}
\item There exists a unique order $\mathfrak{O}$ in $\aw$ of 
discriminant $\mathfrak{a}_{0}\mathfrak{e}$ containing $\am$. 
Here $\mathfrak{a}_{0}$ is the product of the prime ideals of $\mathfrak{a}$ 
that are prime to the discriminant $\mathfrak{e}$ of $\aw$. 
For $v \in \mathbf{h}$, 
$\mathfrak{O}_{v}$ is a unique maximal order in the division algebra $\aw_{v}$, 
which is given by $\{x \in \aw_{v} \mid \n_{v}[x] \in \g_{v}\}$ 
if $\mathfrak{a}_{v} = \g_{v}$ and $\psi_{v}$ is anisotropic, 
and $\mathfrak{O}_{v} = \am_{v}$ otherwise. 
\item For $v \in \mathbf{h}$, 
$M_{v}$ can be given by 
\begin{gather}
(M\xi)_{v} = 
\begin{cases}
\mr_{v}(\mathfrak{O}_{v} \cap \awc_{v}) & \text{if $\mathfrak{a}_{v} = \g_{v}$}, \\
\pe_{v}\mr_{v}(\amd_{v} \cap \awc_{v}) & \text{if $\mathfrak{a}_{v} = \pe_{v}$}. 
\end{cases} \label{mrec}
\end{gather}
\item $C(M) = \tau(T(\mathfrak{O}))$ and $\Gamma(M) = \tau(\Gamma^{*}(\mathfrak{O}))$ 
with the notation of \REF{stbl} and \REF{nml}. 
\item The class number of the genus of $M$ with respect to $SO^{\psi}(W)$ equals 
the type number of $\mathfrak{O}$. 
\end{enumerate}
\end{thm}

Assertions (1) and (2) can be seen from \cite[Lemma 5.3(ii)]{6b} and its proof; 
we remark that the constant $d$ in that statement is 
$(\xi\xi^{*})^{-1}$ in the present one and so it must be viewed as 
$(\xi\xi^{*})^{-1}\g = \mathfrak{a}(\mathfrak{a}\mr)^{-2}$. 
Assertions (3) and (4) can be found in Lemma 5.4 and Theorem 5.9 of \cite{6b}, respectively. 

When our lattice $\lw$ in the introduction is maximal, 
\THM{shi} is applicable. 
Notice that the order $\mathfrak{O}$ in (1) contains $\alw$. 
Similarly for non-maximal $\lw$, 
there is an order $\mathfrak{O}$ in $\aw$ containing $\alw$ 
such that $C(\lw) = \tau(T(\mathfrak{O}))$ and $\Gamma(\lw) = \tau(\Gamma^{*}(\mathfrak{O}))$, 
under some conditions on $h$. 
That is given in \cite[Theorem 3.4]{qeo} and relies on our result \REF{rec0}. 

\subsection{Orthogonal complements in quaternary spaces}

Let $(V,\, \p)$ be a four-dimensional quadratic space over a number field $F$. 
The characteristic algebra $Q(\p)$ is determined by $A(\p) \cong M_{2}(Q(\p))$ by definition. 
Put $B = Q(\p)$ and $K = F(\sqrt{\delta})$ with $\delta = \det(\p)$. 
The core dimension $t_{v}$ of $(V,\, \p)$ at $v \in \mathbf{h}$ is determined by 
\begin{gather}
t_{v} = 
\begin{cases}
0 & \text{if $F_{v}(\sqrt{\delta}) = F_{v}$ and $Q(\p_{v}) = M_{2}(F_{v})$}, \\
4 & \text{if $F_{v}(\sqrt{\delta}) = F_{v}$ and $Q(\p_{v})$ is a division algebra}, \\
2 & \text{if $F_{v}(\sqrt{\delta}) \ne F_{v}$}.
\end{cases} \label{cd4}
\end{gather}
This can be seen from \cite[{\S}3.2]{cq} and the proof of \cite[Lemma 3.3]{cq}. 

For $h \in V$ such that $\p[h] = q \ne 0$ we put 
\begin{gather*}
W = (Fh)^{\perp} = \{x \in V \mid \p(x,\, h) = 0\}. 
\end{gather*}
Then $(W,\, \psi)$ is a nondegenerate ternary quadratic space over $F$ 
with the restriction $\psi$ of $\p$ to $W$. 
Clearly $(V,\, \p) = (W,\, \psi) \oplus (Fh,\, \p|_{Fh})$. 
The invariants of $(W,\, \psi)$ are given by 
$\{3,\, F(\sqrt{-\delta q}),\, Q(\psi),\, \{s_{v}(\psi)\}_{v \in \mathbf{r}}\}$, 
which are independent of the choice of $h$. 
The characteristic algebra $Q(\psi) = A^{+}(W)$ is determined by 
the local algebras $Q(\psi_{v})$ for all primes $v$ of $F$. 
It can be seen from \cite[Theorem 7.4\ (i)]{cq} that 
\begin{gather}
M_{2}(Q(\p)) \cong Q(\psi) \otimes_{F} \{K,\ q\}, \label{char}
\end{gather}
where $F$ is a number field or its completion 
and we understand $\{K,\, q\} = M_{2}(F)$ if $K = F$. 
The index at $v \in \mathbf{r}$ is given by $s_{v}(\psi) = s_{v}(\p) - 1$ if $q_{v} > 0$ 
and $s_{v}(\psi) = s_{v}(\p) + 1$ if $q_{v} < 0$. 
Here $q_{v}$ is the image of $q$ under the embedding of $F$ into 
the field $\mathbf{R}$ of real numbers at $v \in \mathbf{r}$. 
The core dimension of $(W,\, \psi)$ at $v \in \mathbf{h}$ is determined by \REF{cd3}. 

The Clifford algebra $A(W)$ can be viewed as a subalgebra of $A(V)$ 
with the restriction $\psi$ of $\p$ to $W$. 
Then $A^{+}(W) = \{x \in A^{+}(V) \mid xh = hx\}$ and 
$G^{+}(W) = \{\alpha \in G^{+}(V) \mid \alpha h = h\alpha\}$ by \cite[Lemma 3.16]{04}. 
The canonical involution of $A(W)$ coincides with $*$ of $A(V)$ restricted to $A(W)$. 
In particular, 
such a $*$ gives the main involution of the quaternion algebra $\aw$. 
\\

Let $L$ and $M$ be $\g$-maximal lattices in $V$ and $W$ with respect to $\p$ and $\psi$, respectively. 
The discriminant ideals of $\p$ and $\psi$ are given as follows: 
\begin{gather}
[\widetilde{L}/L] = D_{K/F}\mathfrak{e}^{2}, \label{d4} \\
[\widetilde{M}/M] = 2\mathfrak{a}^{-1}D_{Q(\psi)}^{2} \cap 2\mathfrak{a}, \label{d3} 
\end{gather}
where $\mathfrak{e}$ is the product of all the prime ideals 
which are ramified in $B$ and which do not ramify in $K$; 
we understand $D_{K/F} = \g$ if $K = F$; 
we put $\delta q\g = \mathfrak{a}\mathfrak{b}^{2}$ 
with a squarefree integral ideal $\mathfrak{a}$ and a $\g$-ideal $\mathfrak{b}$ of $F$. 
These \REF{d4} and \REF{d3} can be seen by applying \cite[Theorem 6.2]{cq} 
to $(V,\, \p)$ and the complement $(W,\, \psi)$. 

The intersection $L \cap W$ is an integral $\g$-lattice in $W$ with respect to $\psi$. 
It can be verified that $[(L \cap W)\widetilde{\,}/L \cap W] = [M/L \cap W]^{2}[\widetilde{M}/M]$ 
and $[M/L \cap W]$ is an integral ideal, 
which is independent of the choice of $M$; see \cite[Lemma 2.2(6)]{id}. 
Moreover there is a $\g$-ideal $\mathfrak{b}(q)$ of $F$ such that 
\begin{gather}
[M/L \cap W] = \mathfrak{b}(q)(2\p(h,\, L))^{-1} \label{42}
\end{gather}
by \cite[Theorem 4.2]{id}. 
We note that $2\p(h,\, L)$ must contain $\mathfrak{b}(q)$ and that $2\p(h,\, L) \subset \g$ if $h \in L$. 
The ideal $\mathfrak{b}(q)$ is determined by \cite[(4.1)]{id} as follows: 
\begin{gather}
2q[\widetilde{L}/L] = \mathfrak{b}(q)^{2}[\widetilde{M}/M] \label{bq}. 
\end{gather}
Now, 
to $L \cap W$ we associate the order $A^{+}(L \cap W)$ as mentioned in the introduction. 
From \LEM{l1}(3) its discriminant is given by 
\begin{gather}
d(A^{+}(L \cap W)) = 2^{-1}[(L \cap W)\widetilde{\,}/L \cap W] = q[\widetilde{L}/L](2\p(h,\, L))^{-2}. 
\label{od}
\end{gather}
We note that if $d(\alw)$ is squarefree, 
then $2\p(h,\, L)$ must be $\bq$ in \REF{42}, 
that is, 
$\lw$ is maximal in $W$. 

\section{The order $\alw$}

Let $F$ be a nonarchimedean local field and $\pe$ the prime ideal of $F$. 
For $b \in F^{\times}$ we set 
\begin{gather*}
(F(\sqrt{b})/\pe) = 
	\begin{cases}
	1 & \text{if $F(\sqrt{b}) = F$}, \\
	-1 & \text{if $F(\sqrt{b})$ is an unramified quadratic extension of $F$}, \\
	0 & \text{if $F(\sqrt{b})$ is a ramified quadratic extension of $F$}. 
	\end{cases}
\end{gather*}
For a quaternion algebra $B$ over $F$ we also set 
\begin{gather*}
\chi(B) = 
	\begin{cases}
	1 & \text{if $B \cong M_{2}(F)$}, \\
	-1 & \text{if $B$ is a division algebra}. 
	\end{cases}
\end{gather*}

Let $(V,\, \p)$ be a quaternary quadratic space over $F$. 
For $h \in V$ such that $\p[h] = q \ne 0$, 
put $W = (Fh)^{\perp}$ and let $\psi$ be the restriction of $\p$ to $W$. 
Let $t$ be the core dimension of $\p$. 

\begin{lem} \label{l0} 
Suppose that $\p$ is isotropic. 
Let $L$ be a $\g$-maximal lattice in $V$ with respect to $\p$. 
Put $q\g = q_{0}\pe^{2\ell}$ with $q_{0} \in \g^{\times} \cup \pi\g^{\times}$ and $\ell \in \mathbf{Z}$. 
If $\mathfrak{b}(q) = \pe^{\ell}$, 
then $A^{+}(L \cap W) = A^{+}(L) \cap A^{+}(W)$. 
\end{lem}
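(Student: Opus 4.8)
The inclusion $\alw\subseteq\al\cap\aw$ is automatic: from $\lw\subseteq L$ the ring $A(\lw)$ generated by $\g$ and $\lw$ lies in $A(L)$, so $\alw=\aw\cap A(\lw)\subseteq\aw\cap A(L)=\al\cap\aw$ (using $\aw\subseteq\av$). The whole content is therefore the reverse inclusion $\al\cap\aw\subseteq\alw$. Before attacking it I would make one harmless normalization: replacing $h$ by $ch$ ($c\in F^{\times}$) leaves $W$, $L$, $\lw$, $\al$, $\aw$, $\alw$ untouched while scaling $q$ by $c^{2}$ and, by \REF{bq}, $\bq$ by $c$; hence the hypothesis $\bq=\pe^{\ell}$ is scale-invariant and I may assume $h\in L$ and primitive, so that $q=\p[h]\in\g$ and $2\p(h,\,L)\subseteq\g$.

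To see the even Clifford algebra ``downstairs'', introduce the projection $\pi_W\colon V\to W$ along $Fh$, writing $u=\pi_W(u)+\lambda_u h$ with $\lambda_u=\p(u,\,h)/q$ for $u\in L$. Since $hw=-wh$ for $w\in W$ and $h^{2}=q$, for $u,v\in L$ one has
\[
uv=\bigl(\pi_W(u)\pi_W(v)+q\lambda_u\lambda_v\bigr)+\bigl(\lambda_v\pi_W(u)-\lambda_u\pi_W(v)\bigr)h ,
\]
with a similar expansion for longer even products (each pair of $h$'s contributing a factor $q$); the first bracket is the component of $uv$ in $\aw=\{x\in\av\mid xh=hx\}$ and the second lies in $Wh$. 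Writing any $z\in\al\cap\aw$ as a $\g$-combination of $1$ and even products and using that its $Wh$-part vanishes, I get $z=\sum c_{uv}\bigl(\pi_W(u)\pi_W(v)+q\lambda_u\lambda_v\bigr)$ with $c_{uv}\in\g$ (the higher products treated identically). Now fix $v_0\in L$ with $\lambda_0:=\lambda_{v_0}$ generating $\lambda(L)=(2q)^{-1}2\p(h,\,L)$, so that $\pi_W(L)=\lw+\g w_0$ with $w_0=\pi_W(v_0)$, and substitute $\pi_W(u)=a_u+c_u w_0$ and $\lambda_u=c_u\lambda_0+n_u$ ($a_u\in\lw$, $c_u,n_u\in\g$). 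Using that the scalar contributions $q\lambda_u\lambda_v$ and $w_0^{2}+q\lambda_0^{2}=\p[v_0]\in\g$ are integral (the mild $2$-adic point deferred below), every term falls into $\alw$ except the off-diagonal contribution, which reduces to $Xw_0$ modulo $\g$, where $X=\sum c_{uv}(c_v a_u-c_u a_v)\in\lw$. Thus the reverse inclusion is equivalent to $Xw_0\in\alw$, the only possible obstruction being the factor $w_0\in\pi_W(L)$ not lying in $\lw$.

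The additional leverage is the vanishing of the $Wh$-part, $\sum c_{uv}(\lambda_v\pi_W(u)-\lambda_u\pi_W(v))=0$, which after the same substitution collapses to a relation $m\,w_0+\lambda_0 X\in\lw$ with $m\in\g$, whence $m\,Xw_0\in\alw$. The decisive step---and the one I expect to be the real obstacle---is to cancel the factor $m$ together with the denominator hidden in $\lambda_0$: one must run a $\pe$-adic valuation count combining $q\g=q_0\pe^{2\ell}$, the exact value of $2\p(h,\,L)$, and the hypothesis $\bq=\pe^{\ell}$ through \REF{42} and \REF{bq}, to conclude that $Xw_0$ itself already lies in $\alw$. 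This is exactly where isotropy of $\p$ is used, via the Witt data \REF{w} that pins down $\pi_W(L)$, $\lambda(L)$ and $2\p(h,\,L)$ locally; the half-integral values of $\p$ make the case $\pe\mid 2$ the delicate part and would need separate care. Granting this, $z\in\alw$ and $\alw=\al\cap\aw$.

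As a cross-check, one may instead finish by discriminants: both $\alw$ and $\al\cap\aw$ are orders in the quaternion algebra $\aw$, so by the index relation of \LEM{l1}\,(2) they coincide as soon as they share a discriminant; by \REF{od} one has $d(\alw)=q[\widetilde{L}/L]\bigl(2\p(h,\,L)\bigr)^{-2}$, and the same local count under $\bq=\pe^{\ell}$ identifies $d(\al\cap\aw)$ with this value. Either route concentrates all the difficulty in the single $\pe$-adic computation above.
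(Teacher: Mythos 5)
Your proposal correctly reduces the problem to the inclusion $\al \cap \aw \subseteq \alw$ and correctly identifies that the conclusion should follow from an index or discriminant comparison between the two orders. But the proof is not complete: the decisive step is explicitly deferred twice. In the element-wise route you write ``Granting this, $z\in\alw$'' at exactly the point where the hypothesis $\bq = \pe^{\ell}$ must do its work, and in the discriminant route you assert without computation that ``the same local count \ldots identifies $d(\al\cap\aw)$'' with $q[\widetilde{L}/L](2\p(h,\,L))^{-2}$. Note also that \LEM{l1} computes discriminants only for orders of the form $A^{+}(N)$ with $N$ a lattice in $W$; since $\al\cap\aw$ is not a priori of that form, its discriminant (or its index in some reference order) must be computed by hand, and that computation is the entire content of the lemma. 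A further loose end: for $\dim V=4$ the even Clifford algebra has a degree-four component, so ``the higher products treated identically'' needs justification, and the case $\pe\mid 2$, which you flag as delicate, is not addressed.

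For comparison, the paper avoids element-chasing altogether. Using isotropy it takes the weak Witt decomposition $V = Kg \oplus (Fe+Ff)$, $L = \mr g + (\g e + \g f)$, replaces $h$ by the explicit representative $k = q\pi^{-m}e + \pi^{m}f$ via \REF{th13} (with $2\p(h,\,L)=\pe^{m}$), and reads off $\lw = \mr g + \pe^{-m}(q\pi^{-m}e-\pi^{m}f)$. The hypothesis $\bq=\pe^{\ell}$ enters once, to certify that $M = \mr g + \pe^{-\ell}(q\pi^{-m}e-\pi^{m}f)$ is a \emph{maximal} lattice containing $\lw$ with $[M/\lw]=\bq(2\p(h,\,L))^{-1}$. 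Then, via the isomorphism $\Psi(A(V)) \cong M_{2}(A(Kg))$, both $\Psi(\am)=\mathfrak{f}+\pi^{-\ell}\mr\eta$ and $\Psi(\al\cap\aw)=\mathfrak{f}+\pi^{-m}\mr\eta$ are computed explicitly, giving $[\am/\al\cap\aw]=[M/\lw]^{2}$; since $\alw\subseteq\al\cap\aw$ and $[\am/\alw]=[M/\lw]^{2}$ by \LEM{l1}\ (1), the two orders coincide. This is the same sandwich-and-compare-indices skeleton you aim at, but the explicit matrix-algebra realization is what actually closes the argument, and it handles $\pe\mid 2$ uniformly. To salvage your write-up you would need to carry out the valuation count you postpone, which in effect amounts to redoing this computation.
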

\begin{proof}
Since $\p$ is isotropic, 
$t$ is $0$ or $2$. 
Let $K$ be the discriminant algebra of $\p$ defined by 
$K = F \times F$ or $K = F(\sqrt{\det(\p)})$ according as $t = 0$ or $t = 2$. 
Because $K$ is embeddable in $A^{+}(V)$, 
we identify $K$ with the image of it. 
Then there is a \textit{weak} Witt decomposition as follows: 
%(cf.\ \cite[(1.19)\ and\ (1.20)]{rec}): 
\begin{gather}
V = Kg \oplus (Fe + Ff), \qquad L = \mathfrak{r}g + (\g e + \g f), \nonumber \\ 
(Kg,\ \p) \cong (K,\ c\kappa) \quad \text{via } xg \longmapsto x \label{w02} 
\end{gather}
with some elements $e$ and $f$ of $V$ such that $\p[e] = \p[f] = 0$ and $2\p(e,\, f) = 1$, 
and also $g \in V$ such that $g^{2} = c \in F^{\times}$. 
Here $\mathfrak{r} = \g \times \g$ if $t = 0$ and 
$\mathfrak{r}$ is the maximal order of $K$ if $t = 2$. 
We may assume that $c = 1$ if $t = 0$, 
$c \in \g^{\times}$ if $(K/\pe) = -1$ and $\chi(Q(\p)) = +1$, 
$c \in \pi \g^{\times}$ if $(K/\pe) = -1$ and $\chi(Q(\p)) = -1$, 
and $c \in \g^{\times}$ if $(K/\pe) = 0$. 
We note that $A^{+}(Kg) = K$ and $xg = gx^{*}$ for $x \in K$, 
and that $(a,\, b)^{*} = (b,\, a)$ and $\kappa[(a,\, b)] = ab$ for $(a,\, b) \in K$ when $t = 0$. 

Put $2\p(h,\, L) = \pe^{m}$. 
Then $m \le \ell$ because $\mathfrak{b}(q)$ is contained in $2\p(h,\, L)$. 
We take $k = q\pi^{-m}e + \pi^{m}f$. 
This satisfies $\p[h] = \p[k]$ and $2\p(h,\, L) = 2\p(k,\, L)$. 
By virtue of \REF{th13}, 
there exists $\alpha \in C(L)$ such that $h\alpha = k$. 
Under the isomorphism $\alpha$, 
we may identify $W$ with $(Fk)^{\perp}$ 
and $\psi$ with the restriction of $\p$ to $(Fk)^{\perp}$; 
for details, see the explanation of \REF{idrc} below. 
Then we have 
\begin{gather}
W = Kg \oplus F(q\pi^{-m}e - \pi^{m}f), \qquad L \cap W = \mathfrak{r}g + \pe^{-m}(q\pi^{-m}e - \pi^{m}f). 
\label{wl0}
\end{gather}
We set $M = \mathfrak{r}g + \pe^{-\ell}(q\pi^{-m}e - \pi^{m}f)$. 
Since $\psi[M] \subset \g$ and $[M/L \cap W] = [\pe^{-\ell}/\pe^{-m}] = \mathfrak{b}(q)(2\p(h,\, L))^{-1}$, 
$M$ is a $\g$-maximal lattice in $(W, \, \psi)$ containing $L \cap W$. 

We are going to show that $A^{+}(M)$ contains $A^{+}(L) \cap A^{+}(W)$ 
by means of the $F$-linear ring-isomorphism $\Psi$ of $A(V)$ introduced in \cite[{\S}2.4]{04}. 
Recall that $\Psi(A(V)) = M_{2}(A(Kg)) = M_{2}(\{K,\, g^{2}\})$ and 
\begin{gather*}
\Psi(A^{+}(V)) = \left \{
\begin{bmatrix}
   		a & b \\ 
		c & d \\
\end{bmatrix}
\in M_{2}(A(Kg)) \Biggm| a,\, d \in K,\ b,\, c \in Kg
\right \}. 
\end{gather*}
Since $A^{+}(W) = \{x \in A^{+}(V) \mid xh = hx\}$ and 
$\Psi(h) = 
\left[
  	\begin{array}{@{\, }cc@{\, }}
   		0 & q\pi^{-m} \\ 
		\pi^{m} & 0 
  	\end{array} 
\right] 
$, 
we see that 
\begin{gather}
\Psi(A^{+}(W)) = \{x \in \Psi(A^{+}(V)) \mid x\Psi(h) = \Psi(h)x\} = K + K\eta, \label{l0aw} \\ 
\eta = 
\begin{bmatrix}
   		0 & q\pi^{-m}g \\ 
		\pi^{m}g & 0 \\
\end{bmatrix}. 
\label{eta}
\end{gather}
Here we identify $K$ with $\Psi(K) = \{\mathrm{diag}[a,\, a] \mid a \in K\}$. 
Put $\mathfrak{r} = \g[\xi]$. 
We have then 
\begin{gather*}
A^{+}(M) = \mathfrak{f} + \pi^{-\ell}\mathfrak{r}g(q\pi^{-m}e - \pi^{m}f) 
	\cong \Psi(A^{+}(M)) = \mathfrak{f} + \pi^{-\ell}\mathfrak{r}\eta, 
\end{gather*}
where $\mathfrak{f} = \g + c\g \xi$, the order in $K$ of conductor $c\g$. 
Now, 
$\Psi(A(L)) = M_{2}(A(\mathfrak{r}g))$. 
This combined with $A^{+}(L) \cap A^{+}(W) = A(L) \cap A^{+}(W)$ and 
$A(\mathfrak{r}g) = \mathfrak{f} + \mathfrak{r}g$ shows that 
\begin{eqnarray}
\Psi(A^{+}(L) \cap A^{+}(W)) &=& 
\left \{
\begin{bmatrix}
   		a & bq\pi^{-m}g \\ 
		b\pi^{m}g & a \\
\end{bmatrix}
\in M_{2}(\mathfrak{f} + \mathfrak{r}g) 
\biggm| a,\, b \in K
\right \} \nonumber \\
&=& \mathfrak{f} + \pi^{-m}\mathfrak{r}\eta. \label{l0alw}
\end{eqnarray}
Hence $A^{+}(M)$ contains $A^{+}(L) \cap A^{+}(W)$. 
Moreover we have 
$[A^{+}(M)/A^{+}(L) \cap A^{+}(W)] = 
	[\pe^{-\ell}\mathfrak{r}/\pe^{-m}\mathfrak{r}] = [M/L \cap W]^{2}$. 
On the other hand, 
$A^{+}(L) \cap A^{+}(W)$ contains $A^{+}(L \cap W)$ and 
$[A^{+}(M)/A^{+}(L \cap W)] = [M/L \cap W]^{2}$ by \LEM{l1}(1). 
Thus $A^{+}(L \cap W)$ must coincide with $A^{+}(L) \cap A^{+}(W)$. 
\end{proof}

\begin{lem} \label{lalw}
Let $L$ be a $\g$-maximal lattice in $V$ with respect to $\p$. 
Then $A^{+}(L \cap W) = A^{+}(L) \cap A^{+}(W)$ 
for every $h \in V$ such that $\p[h] \ne 0$. 
\end{lem}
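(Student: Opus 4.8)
The inclusion $\alw \subseteq \al \cap \aw$ is immediate: since $\lw \subseteq L$ gives $A(\lw) \subseteq A(L)$, we get $\alw = \av \cap A(\lw) \subseteq \av \cap A(L) = \al$, while $\alw \subseteq \aw$ is trivial. So the whole content is the reverse inclusion. I would begin by noting that $\lw$, $\aw$ and $\al \cap \aw$ all depend only on the line $Fh$, not on its representative, so after rescaling I may assume $h \in L$ and put $2\p(h,\, L) = \pe^{m}$. I would also record the identity $\al \cap \aw = \{x \in \al \mid xh = hx\}$, coming from $\aw = \{x \in \av \mid xh = hx\}$, so that the object to be identified is the centralizer of $h$ in the order $\al$.

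When $\p$ is isotropic (core dimension $t = 0$ or $2$) the plan is to reuse the mechanism of \LEM{l0}, but now without its extra hypothesis $\bq = \pe^{\ell}$ (where $q\g = q_{0}\pe^{2\ell}$). Fixing a (weak) Witt decomposition $V = Kg \oplus (Fe + Ff)$ with $L = \mathfrak{r}g + (\g e + \g f)$, I would use \REF{th13} to move $h$ to the normal form $q\pi^{-m}e + \pi^{m}f$ inside $C(L)$ (legitimate since $2m \le \ord q$ for $h \in L$), and pass to the ring isomorphism $\Psi : A(V) \to M_2(A(Kg))$. As in \REF{eta} and \REF{l0alw}, computations that never invoked $\bq = \pe^{\ell}$, this gives $\Psi(\aw) = K + K\eta$ and $\Psi(\al \cap \aw) = \mathfrak{f} + \pi^{-m}\mathfrak{r}\eta$. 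It then remains to compute $\alw$ directly: from the explicit generators $\mathfrak{r}g$ and $\pe^{-m}(q\pi^{-m}e - \pi^{m}f)$ of $\lw$ in \REF{wl0}, I would form all even products and check that the resulting order is again $\Psi^{-1}(\mathfrak{f} + \pi^{-m}\mathfrak{r}\eta)$, yielding $\alw = \al \cap \aw$ with no maximality assumption on $\lw$.

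For the anisotropic case ($t = 4$) there is no hyperbolic plane and this $\Psi$ is unavailable, so I would argue through discriminants. Identify $(V,\, \p)$ with $(B,\, \beta)$, where $B = Q(\p)$ is a division quaternion algebra and $L = \mathfrak{o}$ its unique maximal order; then $\aw = Q(\psi) \cong B$ by \REF{char} and $\lw = \mathfrak{o} \cap W$ is the unique maximal lattice in $W$. By \REF{od} together with $[\widetilde{L}/L] = \pe^{2}$ from \REF{d4}, one has $d(\alw) = q\pe^{2}(2\p(h,\, L))^{-2}$; using the explicit structure of $\al = A^{+}(\mathfrak{o})$ inside $\av$ (a product of two quaternion algebras over $F$) and the embedding of $\aw$ in $\av$, I would compute $d(\al \cap \aw)$ and verify it equals $d(\alw)$. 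Since for orders $\mathfrak{o}_{1} \subseteq \mathfrak{o}_{2}$ in a quaternion algebra equal discriminants force $\mathfrak{o}_{1} = \mathfrak{o}_{2}$ (via the index relation of \LEM{l1}\,(2)), this settles the anisotropic case.

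The step I expect to be the main obstacle is precisely this anisotropic case: without a Witt decomposition I cannot mirror \LEM{l0}, and I must instead describe $A^{+}(\mathfrak{o})$ and the position of $\aw$ inside $\av$ explicitly enough to pin down $d(\al \cap \aw)$. In the isotropic case the only delicate points are the direct order computation of $\alw$ from its generators and the elementary verification that $2m \le \ord q$ for $h \in L$, which guarantees that the normal form used above is legitimate.
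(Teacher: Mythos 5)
Your proof of the easy inclusion and your reduction to $h \in L$ are fine, but the isotropic case rests on a false claim. You propose to move $h$ by an element of $C(L)$ to the purely hyperbolic normal form $k = q\pi^{-m}e + \pi^{m}f$ and assert that the needed inequality $2m \le \ord(q)$ is an ``elementary verification'' for $h \in L$. It is not true in general. By \REF{th13} one may replace $h$ by $k$ only if $k \in L[q,\ \p(h,\,L)]$, and since $2\p(k,\,L) = \pe^{\min(\nu-m,\,m)}$ this forces $2m \le \nu$. But $m$ is only bounded by $\la$, where $\bq = \pe^{\la}$, and $\la$ exceeds $\nu/2$ whenever $\bq \ne \pe^{\ell}$. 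Concretely: take $t = 2$, $K/F$ unramified, $Q(\p)$ a division algebra, so $c = g^{2} \in \pi\g^{\times}$, and let $h = g \in L$, so that $q = c$, $\nu = 1$, and $2\p(h,\,L) = c\,\mathrm{Tr}_{K/F}(\mathfrak{r}) = \pe$, i.e.\ $m = 1$ and $2m > \nu$. For such $h$ no element of $L[q,\ \p(h,\,L)]$ lies in the hyperbolic part; the complement $(Fh)^{\perp}$ meets the core subspace nontrivially and has a shape quite different from $Kg \oplus F(q\pi^{-m}e - \pi^{m}f)$. This is exactly why the hypothesis $\bq = \pe^{\ell}$ appears in \LEM{l0} (it guarantees $m \le \ell$, hence $2m \le \nu$), and why the paper cannot simply ``reuse the mechanism of \LEM{l0} without its extra hypothesis'': the remaining cases require normal forms such as $zg + \pi^{m}e$ with $z$ in the core, the auxiliary vector $f_{0}$ of \REF{f0}, and the explicit matrices of {\S}{\S}3.2--3.3.

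A second, smaller issue is the closing step of your isotropic argument: you propose to compute $\alw$ ``directly from the generators'' of $\lw$ and compare it with $\mathfrak{f} + \pi^{-m}\mathfrak{r}\eta$. The paper deliberately avoids this: identifying the order generated by a non-maximal lattice is itself delicate, and the actual mechanism is an index comparison --- one exhibits a surjection of $\am$ onto $\al \cap \aw$ of index $[M/\lw]^{2}$, which by \LEM{l1}\,(1) equals $[\am/\alw]$, and the chain $\alw \subset \al \cap \aw \subset \am$ then forces equality. Your $t = 4$ plan (compare $d(\al \cap \aw)$ with $d(\alw)$ and invoke the nested-orders discriminant criterion) is sound in principle and close in spirit to what the paper does, but as you yourself note it still requires an explicit description of $A^{+}(\mathfrak{o})$ and of the embedding of $\aw$ in $\av$, which is precisely the content of the bases \REF{lw40}--\REF{alw41}; as written it is a statement of intent rather than a proof. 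The isotropic gap, however, is the essential one: as it stands your argument only covers the cases already handled by \LEM{l0}.
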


Let $K = F(\sqd)$ be the discriminant field of $\p$ and put $B = Q(\p)$. 
In the rest of this section, 
we shall prove \LEM{lalw} according to the core dimension $t = 0,\, 2,\, 4$ of $(V,\, \p)$. 
To avoid a repetition of the same argument, 
the proof will be omitted in several such cases. 
%For more detailed proofs for such cases the reader is referred to the proof of \cite[Lemma 3.2]{rec}. 

First of all, 
we may assume that $h \in L$. 
In fact, 
$(W,\, \psi)$ is determined by $F^{\times}h$, 
so that, 
replacing $h$ by $ch$ with a suitable element $0 \ne c \in \g$, 
we have $h \in L$. 

Let $q = q_{0}\pi^{2\ell}$ with $q_{0} \in \g^{\times} \cup \pi\g^{\times}$ and $0 \le \ell \in \mathbf{Z}$. 
Put $2\p(h,\, L) = \pe^{m}$ with $0 \le m \le \lambda$, 
where $\mathfrak{b}(q) = \pe^{\lambda}$. 

\subsection{Case $t = 0$} 

In this case $K = F$ and $B = M_{2}(F)$. 
By \REF{char}, 
$Q(\psi) = M_{2}(F)$ and so the core dimension of $W$ is $1$ by \REF{cd3}. 
The localizations of \REF{d4} and \REF{d3} show that 
$[\widetilde{L}/L] = \g$ and $[\widetilde{M}/M] = 2q_{0}\g$. 
We have then $\mathfrak{b}(q) = \pe^{\ell}$ by \REF{bq}. 
Hence \LEM{l0} proves the assertion in this case. 

\subsection{Case $t = 2$ and $(K/\pe) = -1$} 

We first recall by \REF{cd4} that $t = 2$ if and only if $K \ne F$. 
Assume that $K$ is unramified over $F$. 
Then $\delta \in \g^{\times}$ and $D_{K/F} = \g$. 
Put $\p[h] = q = \varepsilon\pi^{\nu}$ with $\varepsilon \in \g^{\times}$ and $0 \le \nu \in \mathbf{Z}$. 
We take a Witt decomposition of $\p$ as in \REF{w02}. 
Since the isomorphism class of $(Kg,\, \p)$ is determined by $K$ and $c\kappa[K^{\times}]$, 
we may assume with suitably chosen $g \in V$ that 
$c \in \g^{\times}$ or $c \in \pi \g^{\times}$ according as $\pe \nmid D_{B}$ or $\pe \mid D_{B}$. 

Suppose $q \in \p[Kg] = c\kappa[K]$. 
Then $Q(\psi) = M_{2}(F)$ by applying \cite[Theorem 1.1]{id}; 
the core dimension of $W$ is $1$ by \REF{cd3}. 

If $\nu$ is even, 
then $q_{0} \in \g^{\times}$ and $\pe \nmid D_{B}$. 
We have $2q[\widetilde{L}/L][\widetilde{M}/M]^{-1} = \pe^{2\ell}$ by \REF{d4} and \REF{d3}, 
and so $\mathfrak{b}(q) = \pe^{\ell}$. 
Hence \LEM{l0} is applicable to this case. 

If $\nu$ is odd, 
then $B$ is a division algebra. 
We are going to show that 
\begin{gather*}
[A^{+}(M)/A^{+}(L) \cap A^{+}(W)] = [M/L \cap W]^{2}. 
\end{gather*} 
We may take $c = \pi\e$ as $K$ is unramified over $F$. 
There is $z = \pi^{\ell} \in \mr$ so that $\p[zg] = q$. 
Then, 
by virtue of \cite[Theorem 3.5]{y}, 
we can find $k = zg + \pi^{m}e \in L$ so that $\p[k] = \p[h]$ and $\p(k,\, L) = \p(h,\, L)$. 
Thus \REF{th13} provides an element $\gamma \in SO^{\p}$ 
such that $h\gamma = k$ and $L\gamma = L$. 
Under the isomorphism $\gamma$ we may identify $h$ with $k$, 
$W$ with $(Fk)^{\perp}$, 
and $\psi$ with the restriction of $\p$ to $(Fk)^{\perp}$, respectively. 
Then \cite[Lemma 2.4(3)]{id} gives a Witt decomposition of $(W,\, \psi)$ and a maximal lattice $M$ in $W$ as follows: 
\begin{gather}
W = F\sqrt{\delta}g \oplus (Fe + Ff_{0}),\qquad M = \g \sqrt{\delta}g + \g e + \g f_{0}, \label{t2-11} \\
f_{0} = f - \frac{\pi^{2m}}{4q}e - \frac{\pi^{m}}{2q}zg. \label{f0}
\end{gather}

If $\pe \nmid 2$, 
then $\mr = \g[\sqd]$. 
We have 
\begin{eqnarray}
A^{+}(L) &=& \g + \g \pi\sqrt{\delta} + \g ge + \g gf + \g \sqd ge + \g \sqd gf + \g ef + \g \pi\sqd ef, 
	\nonumber \\
A^{+}(M) &=& \g + \g \sqd ge + \g \sqd gf_{0} + \g ef_{0}. \label{amt2-1}
\end{eqnarray}
Observe that 
$\sqd gf_{0} = \sqd gf - (4q)^{-1}\pi^{2m}\sqd ge - (2q)^{-1}\pi^{\ell+m+1}\e \sqd$ 
and $ef_{0} = ef + (2q)^{-1}\pi^{\ell+m}ge$. 
For $x = x_{1} + x_{2}\sqd ge + x_{3}\sqd gf_{0} + x_{4}ef_{0} \in A^{+}(W)$, 
we see that 
\begin{eqnarray*}
x \in A^{+}(L) &\Longleftrightarrow& 
	\begin{cases}
	x_{1} \in \g, & x_{2} - x_{3}(4q)^{-1}\pi^{2m} \in \g, \\ 
	x_{3} \in \g \cap 2q\pi^{-\ell-m}\g, & 
	x_{4} \in \g \cap 2q\pi^{-\ell-m}\g  
	\end{cases} \\ 
&\Longleftrightarrow& 
	[x_{1}\ x_{2}\ x_{3}\ x_{4}] \in \g^{1}_{4}\alpha_{0}, 
\end{eqnarray*}
where $\alpha_{0}$ is a triangular matrix in $GL_{4}(F)$ given by 
\begin{gather}
\begin{bmatrix}
   		1 & 0 & 0 & 0 \\ 
   		0 & 1 & 0 & 0 \\ 
   		0 & (4q)^{-1}\pi^{\ell+m+1} & \pi^{\ell+1-m} & 0 \\ 
   		0 & 0 & 0 & \pi^{\ell+1-m} \\
\end{bmatrix}. 
\label{alw2-111}
\end{gather} 
Thus the $F$-linear automorphism $\alpha$ of $A^{+}(W)$, 
represented by $\alpha_{0}$ with respect to the basis 
$\{1,\, \sqd ge,\, \sqd gf_{0},\, ef_{0}\}$ of $A^{+}(M)$, 
gives a surjection of $A^{+}(M)$ onto $A^{+}(L) \cap A^{+}(W)$. 
Therefore by \cite[Lemma 2.2(1)]{id} we find that 
$[A^{+}(M)/A^{+}(L) \cap A^{+}(W)] = \pe^{2(\ell+1-m)}$. 
Since $\bq = \pe^{\ell+1}$, 
this equals $[M/L \cap W]^{2}$ by \REF{42}. 
On the other hand, 
$A^{+}(L) \cap A^{+}(W)$ contains $A^{+}(L \cap W)$ and 
$[A^{+}(M)/A^{+}(L \cap W)] = [M/L \cap W]^{2}$ by \LEM{l1}(1). 
Thus $A^{+}(L) \cap A^{+}(W)$ must coincide with $A^{+}(L \cap W)$. 

If $\pe \mid 2$, 
by \cite[Lemma 3.5(2)]{99b}, 
we can put $\delta = (1 + \pi^{2\kappa}a)b^{-2}$ with $a,\, b \in \gx$, 
where $2\g = \pe^{\kappa}$. 
Also put $u = 2^{-1}(1 + b\sqd)$; 
then $\mr = \g[u] = \g \sqd + \g u$ and $\sqd = b\delta + 2\sqd u^{*}$. 
By using these, 
we can verified in a similar way to the case $\pe \nmid 2$ 
that there exists an $F$-linear automorphism $\alpha$ of $A^{+}(W)$ 
such that $A^{+}(M)\alpha = A^{+}(L) \cap A^{+}(W)$, 
which is represented by 
\begin{gather}
\begin{bmatrix}
   		1 & 0 & 0 & 0 \\ 
   		0 & 1 & 0 & 0 \\ 
   		2^{-1}\pi b\dl \e & (4q)^{-1}\pi^{\ell+m+1} & \pi^{\ell+1-m} & 0 \\ 
   		0 & (2\e)^{-1}b & 0 & \pi^{\ell+1-m} \\
\end{bmatrix} 
\label{310}
\end{gather}
with respect to the basis $\{1,\, \sqd ge,\, \sqd gf_{0},\, ef_{0}\}$ 
of $A^{+}(M)$ given in \REF{amt2-1}. 
Observing $[A^{+}(M)/A^{+}(L) \cap A^{+}(W)] = [M/L \cap W]^{2}$, 
we have $A^{+}(L)\cap A^{+}(W) = A^{+}(L \cap W)$. 

As for the case $q \not\in \p[Kg]$, 
the discriminant ideals of $\p$ and $\psi$ are respectively given by 
$[\widetilde{L}/L] = \pe^{2}$ or $\g$ and $[\widetilde{M}/M] = 2\pe^{2}$ or $2\pe$ 
according as $\n$ is even or $\n$ is odd. 
From these we have $\bq = \pe^{\ell}$ and so we can apply \LEM{l0}. 

\subsection{Case $t = 2$ and $(K/\pe) = 0$}

Our aim is to construct a surjection of $A^{+}(M)$ onto $A^{+}(L) \cap A^{+}(W)$ 
under the setting of \cite[{\S}3.2]{id}. 
We take a Witt decomposition of $\p$ in \REF{w02} with $g \in V$ so that $c = g^{2} \in \gx$. 
Notice that the isomorphism class of $(Kg,\, \p)$ is determined by $K$ and $c\kappa[K^{\times}]$. 
Put $2\g = \pe^{\kappa}$. 

Suppose $\dl \in \gx$. 
Then $\pe \mid 2$. 
By \cite[Lemma 3.5]{99b}, 
we can put $D_{K/F} = \pe^{2(\kappa-k)}$ and $\dl = (1 + \pi^{2k+1}a)b^{-2}$ 
with $a,\, b \in \gx$ and $0 \le k < \kappa$. 
Also put $u = \pi^{-k}(1 + b\sqd) \in \mr$;
then $\mr = \g[u]$ and $\ka[u] = -\pi a$, 
which is a prime element of $F$. 
We set $q = (-\pi a)^{\n}\e$ with $\e \in \gx$. 

If $q \in \p[Kg]$, 
then our assertion can be seen in a similar way to the case where $(K/\pe) = -1$ and $q \in \p[Kg]$. 

If $q \not\in \p[Kg]$, 
then $\psi$ is anisotropic. 
Fix an element $s \in 1 + \pe^{2(\ka-k)-1}$ so that $s \not\in \ka[\mr^{\times}]$. 
Because $q \in \e \ka[K^{\times}]$, 
we may take $g$ in \REF{w02} so that $g^{2} = s\e$. 
By \cite[Theorem 3.5]{y} combined with \REF{th13}, 
we can identify $h$ with $h^{\prime} = zg + \pi^{-m}q(1 - s)e + \pi^{m}f$ 
and $W$ with $(Fh^{\prime})^{\perp}$ for a fixed $z \in \mr$ so that $\p[zg] = sq$. 
In \cite[(3.2)]{id}, $(W,\, \psi)$ can be given by 
\begin{gather} 
W = ((Fzg)^{\perp} \cap Kg) \oplus (Fg_{2} + Fg_{3}), \label{t2003} \\
g_{2} = \frac{q(s-1)}{\pi^{2m}}e + f, \qquad 
g_{3} = e - \frac{\pi^{m}}{2sq}zg. \label{g2g3}
\end{gather}

Let $\n$ be even. 
We take $z = (-\pi a)^{\ell}$; 
then $\p[zg] = sq$ and $W = F\sqd g \oplus (Fg_{2} + Fg_{3})$; 
\cite[(3.6)]{id} gives a maximal lattice $M$ in $W$ as follows: 
\begin{gather}
M = \g \sqd g + \g \pi^{-\mm}g_{2} + \g g_{0}, \qquad 
g_{0} = \pi^{-k}\sqd g + \frac{2s\e a^{\ell}}{\pi^{\ka+k+\mmm}b}g_{3} \label{m20030}, 
\end{gather}
where $\la = \ell+\ka-k-1$, $\mm = \la - m$, and $\mmm = m-\ka-\ell$. 
Noticing $L = (\g \sqd + \g u)g + (\g g_{2} + \g e)$, 
we have 
\begin{eqnarray}
A^{+}(L) &=& \g + \g \sqd u^{*} + \g \sqd gg_{2} + \g \sqd ge + \nonumber \\
	& & \qquad + \g ugg_{2} + \g uge + \g g_{2}e + \g \sqd u^{*}g_{2}e, \nonumber \\
A^{+}(M) &=& \g + \g \pi^{-\mm}\sqd gg_{2} + \g \sqd gg_{0} + \g \pi^{-\mm}g_{2}g_{0}. \label{am20030}
\end{eqnarray}
Straightforward calculations show that 
\begin{eqnarray*}
\sqd gg_{0} &=& -\frac{\dl s\e(1 + (-1)^{\ell})}{\pi^{k}} + 
	\frac{2s\e a^{\ell}}{\pi^{\ka+k+\mmm}b}\sqd ge - 
	\frac{(-1)^{\ell+k}a^{k}s\e}{b}\sqd u^{*}, \\
g_{2}g_{0} &=& \frac{\dl s\e(1 + (-1)^{\ell})}{\pi^{k}}\sqd gg_{2} + 
	\frac{2s\e a^{\ell}}{\pi^{\ka+k+\mmm}b}g_{2}e - 
	\frac{(-1)^{\ell+k}a^{k}}{b}ugg_{2}. 
\end{eqnarray*}
For $x = x_{1} + x_{2}\pi^{-\mm}\sqd gg_{2} + x_{3}\sqd gg_{0} + x_{4}\pi^{-\mm}g_{2}g_{0} \in A^{+}(W)$, 
we see that 
\begin{eqnarray*}
x \in A^{+}(L) &\Longleftrightarrow& 
	\begin{cases}
	x_{1} - x_{3}\pi^{-k}\dl s\e(1 + (-1)^{\ell}) \in \g, & \\ 
	x_{2} \in \pi^{\mm}\g, & \\ 
	x_{3} \in \g \cap (2s\e a^{\ell})^{-1}\pi^{\ka+k+\mmm}b\g, & \\ 
	x_{4} \in \pi^{\mm}(\g \cap (2s\e a^{\ell})^{-1}\pi^{\ka+k+\mmm}b\g) 
	\end{cases} \\
&\Longleftrightarrow& 
	[x_{1}\ x_{2}\ x_{3}\ x_{4}] \in \g^{1}_{4}\alpha_{0}, 
\end{eqnarray*}
where $\alpha_{0} \in GL_{4}(F)$ is given by 
\begin{gather}
\begin{bmatrix}
   		1 & 0 & 0 & 0 \\ 
   		0 & \pi^{\mm} & 0 & 0 \\ 
   		\pi^{-k}\dl s\e(1 + (-1)^{\ell}) & 0 & 1 & 0 \\ 
   		0 & 0 & 0 & \pi^{\mm} \\
\end{bmatrix}. 
\label{alw20030}
\end{gather}
Thus the $F$-linear automorphism $\alpha$ of $A^{+}(W)$, 
represented by $\alpha_{0}$ with respect to the basis 
$\{1,\, \pi^{-\mm}\sqd gg_{2},\, \sqd gg_{0},\, \pi^{-\mm}g_{2}g_{0}\}$ of $A^{+}(M)$, 
gives a surjection of $A^{+}(M)$ onto $A^{+}(L) \cap A^{+}(W)$. 
Therefore, 
together with $\bq = \pe^{\la}$, 
we obtain 
$[A^{+}(M)/A^{+}(L) \cap A^{+}(W)] = \pe^{2(\la-m)} = [M/L \cap W]^{2}$. 
Thus $A^{+}(L) \cap A^{+}(W)$ must be $A^{+}(L \cap W)$. 

Let $\n$ be odd. 
We take $z = (-\pi a)^{\ell}u$. 
Observe that $\p[zg] = sq$ and $W = F(b\dl + \sqd)g \oplus (Fg_{2} + Fg_{3})$. 
Put $v = \pi^{-k}(b\dl + \sqd) \in \mr$. 
By \cite[(3.6)]{id} we have a maximal lattice $M$ in $W$ as follows: 
\begin{gather}
M = \g vg + \g \pi^{-\mm}g_{2} + \g g_{0}, \qquad 
g_{0} = \pi^{-k-1}vg + \frac{2s\e a^{\ell+1}}{\pi^{\ka+k+\mmm+1}b}g_{3}, \label{320}
\end{gather}
where $\la = \ell+\ka-k$, $\mm = \la - m$, and $\mmm = m-\ka-\ell-1$. 
Noticing $L = (\g + \g v)g + (\g g_{2} + \g e)$, 
we have 
\begin{eqnarray*}
A^{+}(L) &=& \g + \g v + \g vgg_{2} + \g vge + \g gg_{2} + \g ge + \g g_{2}e + \g vg_{2}e, \nonumber \\
A^{+}(M) &=& \g + \g \pi^{-\mm}vgg_{2} + \g vgg_{0} + \g \pi^{-\mm}g_{2}g_{0}. \label{am20031}
\end{eqnarray*}
Then the desired fact can be derived in the same way as in the case of even $\n$. 

Suppose $\dl \in \pi\gx$. 
We may put $q = (-\dl)^{\n}\e$ with $\e \in \gx$ because $\dl$ is a prime element of $F$. 
We have $\mr = \g[\sqd]$ and so $L = (\g + \g \sqd)g + (\g e + \g f)$ in \REF{w02}. 
Then our proof is divided into the two cases of whether or not $q \in \p[Kg]$, 
as seen in the previous case $\dl \in \gx$. 
In either case our assertion can be proven in a similar way. 

\subsection{Case $t = 4$} 

In this case $K = F$ and $B$ is a division algebra; 
$L$ is a unique maximal lattice in $V$ because $\p$ is anisotropic. 
Clearly $(W,\, \psi)$ is anisotropic and so 
$Q(\psi)$ is a division algebra by \REF{char}. 
Also $L \cap W$ is a unique maximal lattice in $W$. 
The discriminant of $A^{+}(L \cap W)$ is 
$2^{-1}[(L \cap W)\widetilde{\,}/L \cap W] = q_{0}^{-1}\pe^{2}$ 
by \LEM{l1}(3) and \REF{d3}. 

If $q_{0} \in \pi\g^{\times}$, 
then $d(A^{+}(L \cap W)) = \pe$, 
whence $A^{+}(L \cap W)$ is a unique maximal order 
$\{x \in A^{+}(W) \mid \nu[x] \in \g\}$ in $A^{+}(W)$, 
where $\nu$ is the norm form of $A^{+}(W)$. 
Since $A^{+}(L \cap W) \subset A^{+}(L) \cap A^{+}(W)$, 
both must be the same order. 

If $q_{0} \in \g^{\times}$, 
we set $K_{1} = F(\sqrt{-q_{0}})$, 
which is the discriminant field of $\psi$. 
We are going to take $\g$-bases of $L$ and $L \cap W$ treated in \cite[{\S}3.4]{id}. 
To do this, 
let us recall the setting given there. 
Put $2\g = \pe^{\kappa}$. 

Assume $K_{1} \ne F$. 
Let $\kappa_{1}$ be the norm form of $K_{1}$ and 
$\mathfrak{r}_{1}$ its maximal order. 
Take an element $c \in \g^{\times}$ so that $c \not\in \kappa_{1}[K_{1}^{\times}]$ 
or $c = \pi$ according as $(K_{1}/\pe) = 0$ or $-1$. 
Then $K_{1} + K_{1}\omega$ is a division quaternion algebra over $F$ with $\omega^{2} = c$ by \REF{om}, 
which can be identified with $B$. 
We may further identify $(V,\, \p)$ with $(B,\, \beta)$, 
where $\beta$ is the norm form of $B$ 
(cf.\ \cite[Theorem 7.5(ii)]{04}). 
Taking $z = \sqrt{-q}$, 
we have $\beta[z] = q = \p[h]$. 
Thus $h\gamma = z$ with some $\gamma \in SO^{\p}$. 
In view of $C(L) = SO^{\p}$ as $\p$ is anisotropic, 
$W$ may be identified with $(Fz)^{\perp}$. 
Then we see that 
\begin{gather}
W = F 1_{B} \oplus K_{1}\omega. \label{w4}
\end{gather}
Here $1_{B}$ is the identity element of $B$. 
We remark that $1_{B}$ should not be confused with the identity element $1$ of $A(V) = A(B)$. 

If $(K_{1}/\pe) = 0$, 
then $\pe \mid 2$. 
We can take $c \in 1 + \pe^{2(\kappa-k)-1}$ so that $c \not\in \kappa_{1}[\mathfrak{r}_{1}^{\times}]$, 
where $D_{K_{1}/F} = \pe^{2(\kappa-k)}$ with $0 \le k < \kappa$. 
Further we can put $-q_{0} = (1 + \pi^{2k+1}a)b^{-2}$ with some $a,\, b \in \g^{\times}$. 
Put $v = \sqrt{-q_{0}}$ and $u = \pi^{-k}(1 + bv)$; 
then $\kappa_{1}[u] = -\pi a$ and so $\mathfrak{r}_{1} = \g[u]$. 
Under this setting we have $\g$-bases of $L$ and $L \cap W$ in \cite[{\S}3.4,\, (3.11)]{id} as follows: 
\begin{eqnarray}
L &=& \g 1_{B} + \g u\omega + 
	\g \frac{\ib+\omega}{\pi^{\kappa-k-1}} + \g \frac{\ib+bv+\omega+bv\omega}{\pi^{\kappa}}, \nonumber \\
L \cap W &=& \g 1_{B} + \g u\omega + \g \frac{\ib+\omega}{\pi^{\kappa-k-1}}. \label{lw40}
\end{eqnarray}
Putting $L = \sum_{i=1}^{4}\g g_{i}$, 
we easily see that 
\begin{gather}
A^{+}(L) \cap A^{+}(W) = \g + \g g_{1}g_{2} + \g g_{1}g_{3} + \g g_{2}g_{3} = A^{+}(L \cap W). \label{alw40}
\end{gather}

If $(K_{1}/\pe) = -1$, 
then $L = \mr_{1} + \mr_{1}\omega$, 
which contains $L \cap W = \g + \mr_{1}\omega$ with $\omega^{2} = \pi$. 
Our assertion in this case can be seen in a straightforward way. 

Assume $K_{1} = F$. 
We may identify $(V,\, \p)$ with $(B,\, -\beta)$ with $\beta$ as above. 
Set $B = J + J\omega$ with an unramified quadratic extension $J$ over $F$ and $\omega^{2} = \pi$. 
There is $z = \sq \in F^{\times}$ such that $-\beta[z] = q$. 
For the same reason as in the case $K_{1} \ne F$, 
we may identify $W$ with $(Fz)^{\perp} = F\sqrt{s} + J\omega$ 
with an element $s \in \g^{\times}$ so that $J = F(\sqrt{s})$. 
Let $\mr_{0}$ be the maximal order of $J$; 
then $L = \mr_{0} + \mr_{0}\omega \supset L \cap W = \g\sqrt{s} + \mr_{0}\omega$. 
If $\pe \mid 2$, 
we may take $s \in 1 + \pi^{2\kappa}\g^{\times}$ so that $s \not\in \g^{\times 2}$; 
putting $u = \pi^{-\kappa}(1 + \sqrt{s})$, 
we have $\mr_{0} = \g[u]$. 
If $\pe \nmid 2$, 
then $\mr_{0} = \g[\sqrt{s}]$. 
Hence 
\begin{gather}
L = \g 1_{B} + \g v + \g \omega + \g v\omega, \qquad 
L \cap W = \g \sqrt{s} + \g \omega + \g v\omega, \label{lw41}
\end{gather}
where $v = u$ or $v = \sqrt{s}$ according as $\pe \mid 2$ or $\pe \nmid 2$. 
Using these bases we have 
\begin{gather}
A^{+}(L \cap W) = 
\g + \g \sqrt{s}\omega + \g \sqrt{s}(v\omega) + \g \omega(v\omega), \label{alw41}
\end{gather}
which is also a $\g$-basis of $A^{+}(L) \cap A^{+}(W)$. 
This completes the proof of \LEM{lalw}. 

\section{Theorem on $\lw$ in the local case}

\begin{thm} \label{th2}
Let $(V,\, \p)$ be a quaternary quadratic space over a nonarchimedean local field $F$. 
For $h \in V$ such that $\p[h] = q \ne 0$, 
put $W = (Fh)^{\perp}$ and let $\psi$ be the restriction of $\p$ to $W$. 
Take an isomorphism $\xi$ as stated in \REF{is3} so that $\xi \xi^{*} \in \g^{\times} \cup \pi\g^{\times}$. 
Let $K = F(\sqd)$ be the discriminant field of $\p$ and put $\delta q\g = \mathfrak{a}\mathfrak{b}^{2}$ 
with $\mathfrak{a} = \g$ or $\mathfrak{a} = \pe$ and a $\g$-ideal $\mathfrak{b}$ of $F$. 
Then, 
for a $\g$-maximal lattice $L$ in $V$ with respect to $\p$, 
$L \cap W$ can be given by 
\begin{gather}
(L \cap W)\xi = \mathfrak{c}[M/L \cap W]\cdot
	\{A^{+}(L \cap W)\widetilde{\,} \cap A^{+}(W)^{\circ}\}. \label{lrec}
\end{gather}
Here $\mathfrak{c} = \mathfrak{a}$ or $\mathfrak{c} = \pe$ according as $Q(\psi) = M_{2}(F)$ or $Q(\psi) \ne M_{2}(F)$ 
and $M$ is a $\g$-maximal lattice in $W$ with respect to $\psi$. 
\end{thm}

We devote {\S}3.1-3.4 to the proof. 
In several cases presented below, 
the proof will be omitted to avoid a repetition of the same argument. 

In the proof of \THM{th2}, 
we may assume that $h \in L$ by a suitable constant multiple. 
We often identify $h$ with a specified element $k$ and $W$ 
with the complement $W^{\prime} = (Fk)^{\perp}$ under some $\alpha \in C(L)$. 
Then it is fundamental for our argument that: 
\begin{gather}
\textit{If \REF{lrec} is valid for $L \cap W^{\prime}$, 
then it is true for $L \cap W$.} \label{idrc} 
\end{gather}
In fact, 
by \cite[Lemma 3.8(ii)]{04}, 
such an $\alpha$ can be extended to 
an $F$-linear ring-automorphism of $A(V)$. 
In view of $(x\alpha)^{*} = x^{*}\alpha$ for $x \in A(V)$, 
we see that $(L \cap W)\alpha = L \cap W^{\prime}$, 
$A^{+}(L)\alpha = A^{+}(L)$, 
$A^{+}(W)\alpha = A^{+}(W^{\prime})$, 
$A^{+}(W)^{\circ}\alpha = A^{+}(W^{\prime})^{\circ}$, 
and $A^{+}(L \cap W)\widetilde{\ }\alpha = A^{+}(L \cap W^{\prime})\widetilde{\,}$ 
with respect to the norm forms of $A^{+}(W)$ and $A^{+}(W^{\prime})$. 
For an element $\xi$ as in \REF{is3} put $\zeta = \xi\alpha$, 
which gives an isomorphism of $W^{\prime}$ onto $A^{+}(W^{\prime})^{\circ}$. 
Clearly $\xi\xi^{*} = \zeta\zeta^{*}$. 
Then it can be seen that $x\alpha\zeta = x\xi\alpha$ for $x \in W$. 
From this fact we can verify \REF{idrc}. 
\\

Hereafter until the end of the proof we set 
\begin{gather*}
\p[h] = q \in \pi^{\nu}\gx, \qquad 2\p(h,\, L) = \pe^{m} 
\end{gather*}
with $0 \le \nu \in \mathbf{Z}$ and $0 \le m \le \lambda$, 
where $\mathfrak{b}(q) = \pe^{\lambda}$ is the ideal of \REF{bq}. 
We also put $q = q_{0}\pi^{2\ell}$ with 
$q_{0} \in \g^{\times} \cup \pi\g^{\times}$ and $0 \le \ell \in \mathbf{Z}$. 

\subsection{Case $t = 0$} 

In this case $K = F$ and $Q(\p) = M_{2}(F)$ by \REF{cd4}. 
We take a Witt decomposition 
\begin{gather}
V = (Fe_{1} + Ff_{1}) \oplus (Fe_{2} + Ff_{2}), \qquad 
L = (\g e_{1} + \g f_{1}) + (\g e_{2} + \g f_{2}) \label{t0} 
\end{gather}
with some elements $e_{i}$ and $f_{i}$ of $V$ such that 
$\p(e_{i},\, e_{j}) = \p(f_{i},\, f_{j}) = 0$ and 
$2\p_{v}(e_{i},\, f_{j}) = 1$ or $0$ according as $i = j$ or $i \ne j$. 
We put $q = \pi^{\n}\e$ with $\e \in \gx$. 
By \REF{char} and \REF{cd3}, 
$\psi$ is isotropic. 
We may identify $h$ with $q\pi^{-m}e_{1} + \pi^{m}f_{1}$ and 
$W$ with $(F(q\pi^{-m}e_{1} + \pi^{m}f_{1}))^{\perp}$ under some element of $C(L)$ 
by \REF{idrc}. 
Put $e = e_{2}$, $f = f_{2}$, and $k = \pi^{-\ell}(q\pi^{-m}e_{1} - \pi^{m}f_{1})$. 
Then $W = Fk \oplus (Fe + Ff)$ is a Witt decomposition, 
$M = \g k + \g e + \g f$ is a maximal lattice in $W$, 
and $L \cap W = \g \pi^{\la-m}k + \g e + \g f$. 
Since $\{k,\, e+f,\, e-f\}$ is an orthogonal basis of $W$ with respect to $\psi$, 
we set $\xi = k(e+f)(e-f)$. 
By \REF{is3}, 
$\xi$ gives an isomorphism of $(W,\, \psi)$ onto $(\awc,\, d^{-1}\nc)$ 
with $d = \e$ if $\n$ is even and $d = \pi\e$ if $\n$ is odd. 
We have $M\xi = \gt\zeta_{0}$ and $\am = \gf\zeta$ with 
\begin{gather*}
\zeta_{0} = 
\begin{bmatrix}
   		-d(1-2ef) \\ 
   		-ke \\ 
   		kf \\
\end{bmatrix},\qquad 
\zeta = 
\begin{bmatrix}
   		1 \\ 
   		ke \\ 
   		kf \\ 
   		ef \\
\end{bmatrix}. 
\end{gather*}
From these it follows that $(L \cap W)\xi = \gt\mathrm{diag}[\pi^{\la-m},\ 1,\ 1]\zeta_{0}$ 
and $\alw = \gf\mathrm{diag}[1,\ \pi^{\la-m},\ \pi^{\la-m},\ 1]\zeta$. 
The matrix $\n_{0}$ that represents the norm form $\n$ of $\aw$ with respect to $\zeta$ is given by 
\begin{gather*}
\n_{0} = 
\begin{bmatrix}
   		1 & 0 & 0 & 1/2 \\ 
   		0 & 0 & -d/2 & 0 \\ 
   		0 & -d/2 & 0 & 0 \\ 
   		1/2 & 0 & 0 & 0 \\ 
\end{bmatrix}. 
\end{gather*}
We have then $\alwd = \gf\eta$ with 
$\eta = \mathrm{diag}[1,\ \pi^{m-\la},\ \pi^{m-\la},\ 1](2\n_{0})^{-1}\zeta$. 
It can be seen that 
\begin{gather*}
\zeta_{0} = 
\begin{bmatrix}
   		-d & 0 & 0 & 2d \\ 
   		0 & -1 & 0 & 0 \\ 
   		0 & 0 & 1 & 0 \\ 
\end{bmatrix}\zeta. 
\end{gather*}
Hence we can put $\zeta_{0}=p\eta$ with $p\in\g_{4}^{3}$, 
which gives a $\g$-basis of $M\xi$ in $\awc$. 
For $x=x_{0}p\eta\in\awc$ with $x_{0}\in F_{3}^{1}$ 
observe that $x\in\alwd$ if and only if $x_{0}p\in\gf$. 
From this we see that 
$\alwd \cap \awc = \gt\mathrm{diag}[1,\ \pi^{m-\la},\ \pi^{m-\la}]\zeta_{0}$ if $\n$ is even and 
$\alwd \cap \awc = \gt\pi^{-1}\mathrm{diag}[1,\ \pi^{m-\la},\ \pi^{m-\la}]\zeta_{0}$ if $\n$ is odd. 
Comparing these with the expression of $(L \cap W)\xi$ above, 
we have $(L \cap W)\xi = \pi^{\la-m}(\alwd \cap \awc)$ if $\n$ is even 
and $(L \cap W)\xi = \pi^{\la+1-m}(\alwd \cap \awc)$ if $\n$ is odd. 
\\

\subsection{Case $t = 2$ and $(K/\pe) = -1$} 

In this case $K \ne F$ and we may assume $\dl \in \gx$. 
We take a Witt decomposition of $(V,\, \p)$ in \REF{w02} with 
$c = g^{2} \in \g^{\times}$ or $c \in \pi \g^{\times}$ according as $\pe \nmid D_{B}$ or $\pe \mid D_{B}$. 
Here $B = Q(\p)$. 
We put $q = \pi^{\n}\e$ with $\e \in \gx$. 
When $\pe \mid 2$, 
we put $2\g = \pe^{\ka}$ and $\dl = (1 + \pi^{2\ka}a)b^{-2}$ with $a,\, b \in \gx$ 
by \cite[Lemma 3.5(2)]{99b}. 
\\

Suppose $q \in \p[Kg]$. 
Then $\psi$ is isotropic. 
We may take $g$ so that $g^{2} = \e$ or $\pi\e$ according as $\n$ is even or odd. 
Also we may identify $W$ and a maximal lattice $M$ with those of \REF{t2-11} in either case. 
An orthogonal basis of $W$ is given by 
$W = F\sqd g \oplus F(e + f_{0}) \oplus F(e - f_{0})$, 
where $f_{0}$ is given in \REF{f0}. 
We set $\xi = \sqd g(e + f_{0})(e - f_{0})$; 
then $\xi\xi^{*} = \dl \e \in \gx$ if $\n$ is even and 
$\xi\xi^{*} = \pi\dl\e \in \pi\gx$ if $\n$ is odd. 
To obtain a $\g$-basis of $\alwd \cap \awc$, 
we are going to express $M\xi$ by means of a suitable basis of $\alwd$. 
Put $\zeta_{0} = \tp{[\sqd g\xi\ e\xi\ f_{0}\xi]} 
	= \tp{[-g^{2}\dl(1 - 2ef_{0})\ -\sqd ge\ \sqd gf_{0}]}$, 
which gives a $\g$-basis of $M\xi$. 
On the other hand, 
we have 
$\am = \gf \zeta$ and $\alw = \gf \alpha\zeta$, 
where $\zeta$ is given by \REF{amt2-1} for any $\n$; 
$\alpha$ is given by \REF{alw2-110} below if $\n$ is even 
and by \REF{alw2-111} or \REF{310} if $\n$ is odd. 
Then $\alwd = \gf \,\tp{\alpha}^{-1}(2\n_{0})^{-1}\zeta$, 
where $\n_{0}$ is the matrix representing the norm form $\n$ of $\aw$ with respect to $\zeta$; 
\begin{gather*}
\n_{0} = 
\begin{bmatrix}
   		1 & 0 & 0 & 1/2 \\ 
   		0 & 0 & -g^{2}\dl/2 & 0 \\ 
   		0 & -g^{2}\dl/2 & 0 & 0 \\ 
   		1/2 & 0 & 0 & 0 \\ 
\end{bmatrix}. 
\end{gather*}
Put $k = \tp{\alpha}^{-1}(2\n_{0})^{-1}\zeta$. 
We easily see that 
\begin{gather*}
\zeta_{0} = 
\begin{bmatrix}
   		-g^{2}\dl & 0 & 0 & 2g^{2}\dl \\ 
   		0 & -1 & 0 & 0 \\ 
   		0 & 0 & 1 & 0 \\ 
\end{bmatrix}\zeta. 
\end{gather*}

Let $\n$ be even. 
We first need to find $\alpha$ as mentioned. 
By applying the argument in \cite[{\S}3.2]{id} to the present case, 
we have $L \cap W = \gt\alpha_{0}\cdot\tp{[\sqd g\ e\ f_{0}]}$ 
with a matrix $\alpha_{0}$ given by 
\begin{gather*}
\alpha_{0} = 
\begin{bmatrix}
   		1 & 0 & 0 \\ 
   		0 & 1 & 0 \\ 
   		0 & (4q)^{-1}\pi^{\ell+m} & \pi^{\la-m} \\ 
\end{bmatrix} \quad \text{or} \quad 
\begin{bmatrix}
   		-b & 0 & 0 \\ 
   		0 & 1 & 0 \\ 
   		-(2q)^{-1}\pi^{\n}b & (4q)^{-1}\pi^{\ell+m} & \pi^{\la-m} \\ 
\end{bmatrix}
\end{gather*}
according as $\pe \nmid 2$ or not. 
Indeed, 
we can take $\e_{1} = 1$, $\la = \ell - m$, and $a_{1} = 0$ if $\pe \nmid 2$ and 
$\e_{1} = 1$, $\la = \ell - m$, and $a_{1} = \pi^{\ell}$ if $\pe \mid 2$ 
in the proof of Lemma 2.4(4) in \cite{id} with the notation there. 
Then a straightforward calculation gives $\alw = \gf\alpha\zeta$ with 
\begin{gather}
\alpha = 
\begin{bmatrix}
   		1 & 0 & 0 & 0 \\ 
   		0 & 1 & 0 & 0 \\ 
   		0 & (4q)^{-1}\pi^{\ell+m} & \pi^{\la-m} & 0 \\ 
   		0 & 0 & 0 & \pi^{\la-m} \\ 
\end{bmatrix} \quad \text{or} \nonumber \\ 
\begin{bmatrix}
   		1 & 0 & 0 & 0 \\ 
   		0 & 1 & 0 & 0 \\ 
   		(2q)^{-1}\pi^{\n}\dl\e b & (4q)^{-1}\pi^{\ell+m} & \pi^{\la-m} & 0 \\ 
   		0 & 0 & (2q)^{-1}\pi^{\n}b & \pi^{\la-m} \\ 
\end{bmatrix} 
\label{alw2-110}
\end{gather}
according as $\pe \nmid 2$ or not. 
Now, 
an expression of $M\xi$ that we require is given by 
\begin{eqnarray*}
M\xi &=& \gt 
\begin{bmatrix}
   		0 & 0 & 0 & -\pi^{\la-m}\dl\e \\ 
   		0 & 0 & \pi^{\la-m}\dl\e & 0 \\ 
   		0 & -\dl\e & -(4q)^{-1}\pi^{\ell+m}\dl\e & 0 \\
\end{bmatrix}k \quad \text{if $\pe \nmid 2$}, \\
M\xi &=& \gt 
\begin{bmatrix}
   		0 & 0 & 0 & -\pi^{\la-m}\dl\e \\ 
   		0 & 0 & \pi^{\la-m}\dl\e & 0 \\ 
   		0 & -\dl\e & -(4q)^{-1}\pi^{\ell+m}\dl\e & -(2q)^{-1}\pi^{\n}\dl\e b \\
\end{bmatrix}k \quad \text{if $\pe \mid 2$}. 
\end{eqnarray*}
Employing this expression, 
we obtain $\alwd \cap \awc = \gt\alpha_{1}\zeta_{0}$ with 
\begin{gather*}
\alpha_{1} = 
\begin{bmatrix}
   		\pi^{m-\la} & 0 & 0 \\ 
   		0 & \pi^{m-\la} & 0 \\ 
   		0 & (4q)^{-1}\pi^{2m} & 1 \\ 
\end{bmatrix} \quad \text{or} \quad 
\begin{bmatrix}
   		\pi^{m-\la} & 0 & 0 \\ 
   		0 & \pi^{m-\la} & 0 \\ 
   		-(2q)^{-1}\pi^{\ell+m}b & (4q)^{-1}\pi^{2m} & 1 \\ 
\end{bmatrix}
\end{gather*}
according as $\pe \nmid 2$ or not. 
On the other hand, 
$(L \cap W)\xi$ can be given by $\gt \alpha_{0}\zeta_{0}$. 
Comparing $\alpha_{1}$ with $\alpha_{0}$, 
we then find $(L \cap W)\xi = \pi^{\la-m}(\alwd \cap \awc)$. 

If $\n$ is odd, 
in the same manner as in the case of even $\n$, 
we have an expression of $M\xi$ with $k$. 
Employing that expression, 
we obtain $(L \cap W)\xi = \pi^{\la+1-m}(\alwd \cap \awc)$. 
\\

Suppose $q \not\in \p[Kg]$. 
Then $\psi$ is anisotropic. 
We may identify $W$ and $L \cap W$ with those of \REF{wl0} with $g^{2} = c \in \gx \cup \pi\gx$. 
In the present case, 
$c \in \pi\gx$ if and only if $\n$ is even. 
Put $u = \sqd$ or $2^{-1}(1 + b\sqd)$ according as $\pe \nmid 2$ or not; 
then $\mr = \g[u]$. 
We have $W = Fg \oplus F\sqd g \oplus Fk$, 
where $k = q\pi^{-m}e - \pi^{m}f$. 
We set $\xi = \pi^{-\ell-1}g\sqd gk$ if $\n$ is even and 
$\xi = \pi^{-\ell}g\sqd gk$ if $\n$ is odd. 
Since $M = \mr g + \g \pi^{-\ell}k$, 
we have $\am = \gf\zeta$ and $\alw = \gf\alpha\zeta$ with 
$\zeta = \tp{[1\ gug\ \pi^{-\ell}gk\ \pi^{-\ell}ugk]}$ and 
$\alpha = \mathrm{diag}[1,\ 1,\ \pi^{\la-m},\ \pi^{\la-m}]$. 

If $\pe \nmid 2$, 
then $\{g,\, \sqd g,\, \pi^{-m}k\}$ is an orthogonal $\g$-basis of $L \cap W$ with respect to $\psi$. 
By straightforward calculations it can be verified that 
$(L \cap W)\xi = \pi^{\la+1-m}(\alwd \cap \awc)$. 

If $\pe \mid 2$, 
in a similar way to the case of $q \in \p[Kg]$ and even $\n$, 
we see that $(L \cap W)\xi = \pi^{\la+1-m}(\alwd \cap \awc)$. 

\subsection{Case $t = 2$ and $(K/\pe) = 0$} 

We take a Witt decomposition of $(V,\, \p)$ in \REF{w02}. 
Put $2\g = \pe^{\ka}$. 
We may assume that $\dl \in \gx \cup \pi\gx$. 
\\

\textbf{Case $\dl \in \gx$.} 
Then $\pe \mid 2$. 
We employ the notation in the case where 
$t = 2$, $(K/\pe) = 0$, and $\dl \in \gx$ in the proof of \LEM{lalw}. 

Suppose $q \in \p[Kg]$. 
Then $\psi$ is isotropic. 
Our desired fact can be seen in a similar way to the case where $(K/\pe) = -1$ and $q \in \p[Kg]$. 
That is given as follows; 
$(L \cap W)\xi = \pi^{\la-m}(\alwd \cap \awc)$ if $\n$ is even 
and $(L \cap W)\xi = \pi^{\la+1-m}(\alwd \cap \awc)$ if $\n$ is odd 
with some $\xi$ such that $\xi\xi^{*} \in \gx$ if $\n$ is even and 
$\xi\xi^{*} \in \pi\gx$ if $\n$ is odd. 

Suppose $q \not\in \p[Kg]$. 
Then $\psi$ is anisotropic, 
and so $A^{+}(W)$ is a division quaternion algebra over $F$. 
The pair $(A^{+}(W),\, \n)$ can be viewed as a quadratic space of dimension $4$ over $F$. 
To prove our theorem in this case, 
we start with an auxiliary lemma as follows. 
\\

Let $D$ be a division quaternion algebra over a local field $F$. 
For an unramified quadratic field $J$ of $F$ contained in $D$, 
let $\om$ be an element of $D^{\times}$ such that 
$\om^{2} \in \pi\gx$ and $a\om = \om a^{\iota}$ for every $a \in J$. 
Here $\iota$ is the main involution of $D$. 
It can be seen by \cite[Theorem 5.13]{04} that 
\begin{gather}
D = J \oplus J\omega, \qquad \mo = \mr_{J} + \mr_{J}\om \label{unst}
\end{gather}
with respect to the norm form $\n$ of $D$, 
where $\mo$ is a unique maximal order in $D$ and $\mr_{J}$ is the maximal order of $J$. 
Let $\mr_{J} = \g[u]$ and put $v = u - u^{\iota}$; 
then $v \in \mr_{J}^{\times}$ and $v + v^{\iota} = 0$. 

\begin{lem} \label{rcfr}
Let $D = J + J\om$ be a division quaternion algebra over $F$ with the above notation. 
Put $\zeta = \tp{[1\ u\ \om\ u\om]}$. 
Let $(D^{\circ},\, d^{-1}\n^{\circ})$ be a quadratic space 
of dimension $3$ over $F$ with $d \in \gx \cup \pi\gx$ 
and $M$ a $\g$-maximal lattice in $D^{\circ}$ with respect to $d^{-1}\n^{\circ}$. 
Then the following assertions hold: 
\begin{enumerate}
\item $D^{\circ} = Fv \oplus J\om$ and $M = \g pv + \mr_{J}\om$, 
where $p = 1$ or $\pi$ according as $d \in \gx$ or $d\in\pi\gx$. 
\item Let $\mN$ be a $\g$-lattice in $D$. 
Put $\mN = \g_{4}^{1}\eta$, 
$\eta = P\zeta$ with $P \in GL_{4}(F)$, 
and 
\begin{gather*}
A = 
\begin{bmatrix}
   		-\pi^{\ell}(u + u^{\iota}) & 2\pi^{\ell} & 0 & 0 \\ 
   		0 & 0 & 1 & 0 \\ 
   		0 & 0 & 0 & 1 \\
\end{bmatrix}
P^{-1}, 
\end{gather*}
where $\ell = 0$ or $1$ according as $d \in \gx$ or $d\in\pi\gx$. 
Then there are $Q_{1} \in GL_{3}(\g)$ and $Q_{2} \in GL_{4}(\g)$ such that 
\begin{gather*}
Q_{1}AQ_{2} = 
\begin{bmatrix}
   		\e_{1} & 0 & 0 & 0 \\ 
   		0 & \e_{2} & 0 & 0 \\ 
   		0 & 0 & \e_{3} & 0 \\
\end{bmatrix}
\end{gather*}
with the elementary divisors $\{\e_{1}\g,\, \e_{2}\g,\, \e_{3}\g\}$ of $A$. 
\item In the setting of (2), 
$\mN \cap D^{\circ} = \g_{3}^{1} \mathrm{diag}[\e_{1}^{-1},\, \e_{2}^{-1},\, \e_{3}^{-1}]Q_{1}\zeta_{0}$ 
and $[M/\mN \cap D^{\circ}] = (\e_{1}\e_{2}\e_{3})^{-1}\g$, 
where $\zeta_{0} = \tp{[pv\ \om\ u\om]}$ is the basis of $M$ given in (1). 
\end{enumerate}
\end{lem}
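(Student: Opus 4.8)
The plan is to first determine $D^\circ$ and the maximal lattice $M$ explicitly from the action of the main involution, and then to obtain $\mN \cap D^\circ$ by recognizing that the $3\times 4$ block occurring in the definition of $A$ is exactly the matrix expressing the basis $\zeta_{0}$ of $M$ in terms of $\zeta$.

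For (1) I would write $x = a + b\om$ with $a,\,b \in J$. Using that $\iota$ restricts to the nontrivial automorphism of $J/F$, together with $\om^{\iota} = -\om$ and $\om c = c^{\iota}\om$ for $c \in J$, I get $x^{\iota} = a^{\iota} - b\om$; hence $x \in D^\circ$ precisely when $a^{\iota} = -a$, that is $a \in J^\circ = Fv$. Since $J$ and $J\om$ are $\n$-orthogonal, this gives $D^\circ = Fv \oplus J\om$. To identify $M$ I would compute, for $\alpha \in F$ and $b \in J$, $\n[\alpha v + b\om] = \alpha^{2}N_{J/F}(v) - N_{J/F}(b)\om^{2}$, where $N_{J/F}(v) \in \gx$ and $\om^{2} \in \pi\gx$. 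Because $J/F$ is unramified, $N_{J/F}(b)$ has even order while $\om^{2}$ has odd order, so the two summands can never have equal valuation; consequently $d^{-1}\n[\alpha v + b\om] \in \g$ forces each summand to be integral separately. As $(D^\circ,\, d^{-1}\nc)$ is anisotropic ($D$ being a division algebra), its unique $\g$-maximal lattice is $\{x \in D^\circ \mid d^{-1}\nc[x] \in \g\}$ (cf.\ the anisotropic case in \cite[Theorem 7.5\ (ii)]{04}); the valuation count then yields $\alpha \in \g,\ b \in \mr_{J}$ when $d \in \gx$ and $\alpha \in \pe,\ b \in \mr_{J}$ when $d \in \pi\gx$, i.e.\ $M = \g pv + \mr_{J}\om$.

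The crux of (2) and (3) is the identity $\zeta_{0} = A\eta$. Putting $t = u + u^{\iota} \in \g$ one has $v = 2u - t$, so with $p = \pi^{\ell}$ the vector $pv$ has coordinate row $[-\pi^{\ell}(u+u^{\iota}),\ 2\pi^{\ell},\ 0,\ 0]$ relative to $\zeta = \tp{[1\ u\ \om\ u\om]}$, while $\om$ and $u\om$ give the third and fourth coordinate rows. These are precisely the rows of the $3\times 4$ matrix $T$ multiplying $P^{-1}$ in the definition of $A$, so $\zeta_{0} = T\zeta$; since $\eta = P\zeta$ gives $\zeta = P^{-1}\eta$, we obtain $\zeta_{0} = TP^{-1}\eta = A\eta$. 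In particular $A$ has rank $3$ (as $\zeta_{0}$ is an $F$-basis of the three-dimensional $D^\circ$ and $\eta$ a basis of $D$), and assertion (2) is then just the elementary divisor theorem for the matrix $A$ over the discrete valuation ring $\g$.

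For (3) I would work in the $\eta$-coordinates. An element $z\zeta_{0} = zA\eta$ with $z \in F_{3}^{1}$ lies in $\mN = \g_{4}^{1}\eta$ iff $zA \in \g_{4}^{1}$, so $\mN \cap D^\circ = \{z \mid zA \in \g_{4}^{1}\}\zeta_{0}$. Writing $A = Q_{1}^{-1}(Q_{1}AQ_{2})Q_{2}^{-1}$ and using that right multiplication by $Q_{2}^{-1} \in GL_{4}(\g)$ fixes $\g_{4}^{1}$, the condition $zA \in \g_{4}^{1}$ becomes $wE \in \g_{4}^{1}$ with $w = zQ_{1}^{-1}$ and $E = Q_{1}AQ_{2}$ the diagonal $3\times 4$ matrix of (2); this holds iff $w_{i}\e_{i} \in \g$ for each $i$, i.e.\ $w \in \g_{3}^{1}\mathrm{diag}[\e_{1}^{-1},\,\e_{2}^{-1},\,\e_{3}^{-1}]$, whence $z \in \g_{3}^{1}\mathrm{diag}[\e_{1}^{-1},\,\e_{2}^{-1},\,\e_{3}^{-1}]Q_{1}$. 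This is the claimed description of $\mN \cap D^\circ$, and since $M = \g_{3}^{1}\zeta_{0}$, the transition matrix $\mathrm{diag}[\e_{1}^{-1},\,\e_{2}^{-1},\,\e_{3}^{-1}]Q_{1}$ gives, exactly as in the index computations in the proof of \LEM{l1}, $[M/\mN \cap D^\circ] = \det(\mathrm{diag}[\e_{i}^{-1}]Q_{1})\g = (\e_{1}\e_{2}\e_{3})^{-1}\g$, because $\det Q_{1} \in \gx$. The main obstacle is the bookkeeping in (1), namely handling $\iota$ on $J\om$ and the opposite valuation parities correctly; once $\zeta_{0} = A\eta$ is observed, parts (2) and (3) are essentially formal linear algebra over $\g$.
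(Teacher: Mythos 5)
Your proposal is correct and follows essentially the same route as the paper: the key identity $\zeta_{0} = A\eta$ (from $v = 2u - (u+u^{\iota})$), the elementary divisor theorem for the rank-$3$ matrix $A$, and the resulting transition matrix $\mathrm{diag}[\e_{1}^{-1},\, \e_{2}^{-1},\, \e_{3}^{-1}]Q_{1}$ are exactly the paper's argument. The only (harmless) divergence is in part (1), where you identify $M$ via the characterization of the maximal lattice in an anisotropic space as $\{x \in D^{\circ} \mid d^{-1}\nc[x] \in \g\}$ together with the valuation-parity argument, whereas the paper instead computes the discriminant ideal $[\widetilde{M^{\prime}}/M^{\prime}] = 2\pe$ of the candidate lattice and matches it against the discriminant ideal of $(D^{\circ},\, d^{-1}\nc)$; both are standard and complete.
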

\begin{proof}
(1) Clearly, 
$(D^{\circ},\, d^{-1}\n^{\circ})$ is anisotropic. 
Since all $v$, $\om$, and $v\om$ belong to $D^{\circ}$, 
we have $D^{\circ} = Fv \oplus J\om$. 
If $d \in \gx$, 
the second assertion is found in \cite[{\S}7.7\ (III)]{04}. 
If $d \in \pi\gx$, 
we put $M^{\prime} = \g \pi v + \mr_{J}\om$, 
which is an integral lattice in $(D^{\circ},\, d^{-1}\n^{\circ})$. 
Observe $(\mr_{J}\om)\widetilde{\,} = d\pi^{-1}\mr_{J}\om$. 
Then straightforward calculations show that 
$[\widetilde{M^{\prime}}/M^{\prime}] = 
	[(\g \pi v)\widetilde{\,}/\g \pi v][(\mr_{J}\om)\widetilde{\,}/\mr_{J}\om] = 2\pe$. 
This coincides with the discriminant ideal of $(D^{\circ},\, d^{-1}\n^{\circ})$, 
and so $M^{\prime}$ is the maximal lattice $M$. 

(2) Since the matrix $A$ has rank $3$, 
our assertion is an application of the theory of elementary divisors. 

(3) Put $\zeta_{0} = \tp{[pv\ \om\ u\om]}$; 
then $\zeta_{0} = A\eta$ as $v = -(u + u^{\iota}) + 2u$. 
We set $\tp{[v_{1}\ v_{2}\ v_{3}]} = Q_{1}\zeta_{0}$ 
and $\tp{[k_{1}\ k_{2}\ k_{3}\ k_{4}]} = Q_{2}^{-1}\eta$, 
which give $\g$-bases of $M$ and $\mN$, respectively. 
Then $v_{i} = \e_{i}k_{i}$ for $i = 1,\, 2,\, 3$ by (2). 
Thus we have $\mN \cap D^{\circ} = \sum_{i=1}^{3} \g \e_{i}^{-1}v_{i}$. 
From this we see that 
$[M/\mN \cap D^{\circ}] = 
	[\g_{3}^{1}/\g_{3}^{1}\mathrm{diag}[\e_{1}^{-1},\, \e_{2}^{-1},\, \e_{3}^{-1}]] = 
	(\e_{1}\e_{2}\e_{3})^{-1}\g$, 
which proves (3). 
\end{proof}
\vspace{5mm}

Returning to the present case, 
we take our setting to be that of the case where $q \not\in \p[Kg]$ in {\S}2.3 
with $q = (-\pi a)^{\n}\e$ and $g^{2} = s\e \in \gx$. 
The anisotropic space $(W,\, \psi)$ and maximal lattice $M$ 
may be identified with those given in \REF{t2003} and \REF{m20030} or \REF{320}, 
respectively; see \REF{idrc} and its explanation. 
Let $\xi$ be an isomorphism in \REF{is3} so that $\xi\xi^{*} \in \gx \cup \pi\gx$. 
Then we have a maximal lattice $M\xi$ in $(A^{+}(W)^{\circ},\, (\xi\xi^{*})^{-1}\n^{\circ})$. 
Put $s = 1 + \pi^{2(\ka-k)-1}s_{0} \in \gx$ with $s_{0} \in \gx$. 

Our aim is to determine $[M\xi/\alwd \cap \awc]$ 
by applying \LEM{rcfr} to $D = \aw$, $M = M\xi$, and $\mN = \alwd$. 
To do this, there are the following steps: 
\begin{enumerate}
\item Find a structure of $\aw$ as in \REF{unst} with $J$ and $\om$. 
Then $\zeta = \tp{[1\ u\ \om\ u\om]}$ gives an $F$-basis of $\aw$ 
and $\tp{[pv\ \om\ u\om]}$ gives a $\g$-basis of $M\xi$ 
with $v = u - u^{*}$ and $p$ as in \LEM{rcfr}(1). 
\item Compute the matrix $\n_{0} \in GL_{4}(F)$ that represents $\n$ with respect to $\zeta$. 
\item Express $\am = \gf \zeta_{1}$ and $\alw = \gf \alpha\zeta_{1}$ 
with suitable $\zeta_{1}$ and $\alpha \in GL_{4}(F)$. 
\item Compute the matrix $P \in GL_{4}(F) \cap M_{4}(\g)$ such that $\zeta_{1} = P\zeta$; 
then $\alwd = \gf \,\tp{(\alpha P)}^{-1}(2\n_{0})^{-1}\zeta$. 
Put $\eta = \tp{(\alpha P)}^{-1}(2\n_{0})^{-1}\zeta$. 
\item Let $A$ be the matrix in $F^{3}_{4}$ determined by $M\xi = \gt A\eta$; 
then \LEM{rcfr}(2) is applicable to this $A$. 
Find the elementary divisors $\{\e_{1}\g,\ \e_{2}\g,$\ $\e_{3}\g\}$ of $A$. 
\item $[M\xi/\alwd \cap \awc] = (\e_{1}\e_{2}\e_{3})^{-1}\g$ by \LEM{rcfr}(3). 
\end{enumerate} 
\vspace{5mm}

Let $\n$ be even. 
Then $W = F\sqd g \oplus (Fg_{2} + Fg_{3})$ and 
$M = \g \sqd g + \g \pi^{m-\la}g_{2} + \g (\pi^{-k}\sqd g + cg_{3})$, 
where we put $u_{1} = \pi^{-k}(1 + b\sqd)$, 
$z = (-\pi a)^{\ell}$, 
$z_{0} = (\pi a)^{\ell}$, 
and $c = b^{-1}\pi^{-k-m}2z_{0}s\e$; 
$g_{2},\, g_{3}$ are given in \REF{g2g3}. 
It is remarked that 
we use the letter $u_{1}$ in the present setting 
instead of $u$ there. 
We have then 
\begin{gather}
W = F\sqd g \oplus Fg_{2} \oplus Fg_{3}^{\prime}, \qquad 
g_{3}^{\prime} = \frac{\pi^{2m}}{q(1-s)}g_{2} + 2g_{3}. \label{wob20030}
\end{gather}
We set $\xi = \sqd gg_{2}g_{3}^{\prime}$. 
This $\xi$ defines an isomorphism in \REF{is3} so that 
$\xi\xi^{*} = \dl \e \in \gx$. 
We put 
\begin{eqnarray*}
v_{0} &=& \frac{1}{\dl \e}\sqd g + \frac{\pi^{m}b}{z_{0}\e}g_{2} \in W, \\
g^{\prime} &=& \frac{1}{\pi^{k}}\sqd g + 
	\frac{z_{0}s\e}{\pi^{k+m}b}\cdot\left\{\frac{\pi^{2m}}{q(s-1)}g_{2} + g_{3}^{\prime}\right\} \in M. 
\end{eqnarray*}
We also put $v = v_{0}\xi \in \awc$ and $\om = g^{\prime}\xi \in M\xi \subset \awc$. 
It can be seen that 
$\p(v_{0},\, g^{\prime}) = 0$, 
$\n[v] = \xi\xi^{*}\p[v_{0}] = \pi^{2\kappa}s_{0}a - 1$, 
and $\om^{2} = -\n[\om] = b^{-2}\pi\dl s\e^{2}a \in \pi\gx$. 
Then $J = F + Fv$ becomes a quadratic field of $F$ and $v^{2} \in 1 + \pi^{2\ka}\gx$. 
By \cite[Lemma 3.5(2)]{99b}, 
$J$ is an unramified quadratic extension over $F$. 
Further $x\om = \om x^{*}$ for every $x \in J$ by $\p(v_{0},\, g^{\prime}) = 0$. 
Thus we have $\aw = J \oplus J\om$ with respect to $\n$ by \REF{unst}. 
The order $\am$ has discriminant $\pe^{2}$ and so 
it is contained in the maximal order $\mr_{J} + \mr_{J}\om$ in $\aw$. 
Here $\mr_{J} = \g[u]$ is the maximal order of $J$ with $u = 2^{-1}(1 + v)$. 
Put $\zeta = \tp{[1\ u\ \om\ u\om]}$. 
Because $M\xi$ is a maximal lattice in $(\awc,\, (\xi\xi^{*})^{-1}\nc)$, 
$M\xi = \gt \,\tp{[v\ \om\ u\om]}$ by \LEM{rcfr}(1). 
The matrix $\n_{0}$ representing $\n$ with respect to $\zeta$ is given by 
\begin{gather}
\n_{0} = 
\begin{bmatrix}
   		1 & 1/2 & 0 & 0 \\ 
   		1/2 & \n[u] & 0 & 0 \\ 
   		0 & 0 & \n[\om] & \n[\om]/2 \\
   		0 & 0 & \n[\om]/2 & \n[u\om] \\
\end{bmatrix}, \qquad 
\n[u] = \frac{1-v^{2}}{4}. \label{nrep}
\end{gather}

Now we have $\am = \gf \zeta_{1}$ and $\alw = \gf \alpha\zeta_{1}$, 
where $\zeta_{1}$ is given by \REF{am20030} and $\alpha$ by \REF{alw20030}. 
These $v$, $\zeta$, and $\zeta_{1}$ are expressed with $1,\ \sqd g,\ g_{2}$, and $g_{3}^{\prime}$ as follows: 
\begin{eqnarray*}
v &=& -\frac{z_{0}(s-1)b}{\pi^{m}}\sqd gg_{3}^{\prime} - sg_{2}g_{3}^{\prime}, \\ 
\zeta &=& 
\begin{bmatrix}
	1 \\
	\frac{1}{2} - \frac{z_{0}(s-1)b}{2\pi^{m}}\sqd gg_{3}^{\prime} - \frac{s}{2}g_{2}g_{3}^{\prime} \\
	-\frac{\pi^{m}}{\pi^{k}z_{0}(s-1)b}\sqd gg_{2} - 
	\frac{z_{0}s\e}{\pi^{k+m}b}\sqd gg_{3}^{\prime} - \frac{\dl s\e}{\pi^{k}}g_{2}g_{3}^{\prime} \\
	-\frac{\pi^{m}(1+v^{2})}{2\pi^{k}z_{0}(s-1)b}\sqd g\gb - 
	\frac{z_{0}s\e}{\pi^{k+m}b}\sqd g\gbp - \frac{\dl s\e}{\pi^{k}}\gb \gbp \\
\end{bmatrix}, \\
\zeta_{1} &=& 
\begin{bmatrix}
	1 \\
	\pi^{m-\la}\sqd gg_{2} \\
	-\frac{\dl s\e}{\pi^{k}} + \frac{\pi^{m}s}{\pi^{k}z_{0}(s-1)b}\sqd gg_{2} 
		+ \frac{z_{0}s\e}{\pi^{k+m}b}\sqd gg_{3}^{\prime} \\ 
	\frac{\pi z_{0}s\e}{\pi^{\ell+\ka}b} - 
	\frac{\pi^{m+1}}{\pi^{\ell+\ka}}\sqd g \gb + 
	\frac{\pi z_{0}s\e}{\pi^{\ell+\ka}b}\gb \gbp \\
\end{bmatrix}. 
\end{eqnarray*}
Let $\zeta_{1} = P\zeta$ with $P \in GL_{4}(F) \cap M_{4}(\g)$. 
By straightforward computations $P$ is given by  
\begin{gather*}
P = 
\begin{bmatrix}
   		1 & 0 & 0 & 0 \\ 
   		0 & 0 & -\frac{2z_{0}b}{\pi^{\ell+\ka}a} & \frac{2z_{0}b}{\pi^{\ell+\ka}a} \\ 
   		-\frac{\dl s\e(1+v^{2})}{\pi^{k}v^{2}} & \frac{2\dl s\e}{\pi^{k}v^{2}} 
			& \frac{s}{v^{2}} & -\frac{2s}{v^{2}} \\ 
   		\frac{\pi z_{0}s\e(1+v^{2})}{\pi^{\ell+\ka}bv^{2}} & 
			-\frac{2\pi z_{0}s\e}{\pi^{\ell+\ka}bv^{2}} & 
			-\frac{\pi^{\ka}z_{0}bs_{0}}{\pi^{\ell+k}v^{2}} & 
			\frac{2\pi^{\ka}z_{0}bs_{0}}{\pi^{\ell+k}v^{2}} \\ 
\end{bmatrix}. 
\end{gather*}
Then $\alwd = \gf \,\tp{(\alpha P)}^{-1}(2\n_{0})^{-1}\zeta$. 
Putting $\eta = \tp{(\alpha P)}^{-1}(2\n_{0})^{-1}\zeta$, 
we have $M\xi = \gt A\eta$ with 
\begin{gather*}
A = 
\begin{bmatrix}
   		-1 & 2 & 0 & 0 \\ 
   		0 & 0 & 1 & 0 \\ 
   		0 & 0 & 0 & 1 \\
\end{bmatrix} 2\n_{0}\cdot\tp{(\alpha P)} 
=
\begin{bmatrix}
   		0 & 0 & -\frac{2\dl s\e}{\pi^{k}} & \frac{2z_{0}s\e}{\pi^{k+m}b} \\ 
   		0 & \frac{2z_{0}\om^{2}b}{\pi^{k+m+1}a} & 0 & 0 \\ 
   		0 & \frac{2z_{0}\om^{2}b(1-2\n[u])}{\pi^{k+m+1}a} 
		& -\om^{2}s & \frac{z_{0}\om^{2}(s-1)b}{\pi^{m}} \\
\end{bmatrix}. 
\end{gather*}
We are going to apply \LEM{rcfr}(2) to $\alwd$ and $M\xi$. 
Put 
\begin{gather*}
Q_{1} = 
\begin{bmatrix}
   		1 & \frac{2\dl \e(1-2\n[u])}{\pi^{k}\om^{2}} & -\frac{2\dl \e}{\pi^{k}\om^{2}} \\ 
   		0 & 1 & 0 \\ 
   		0 & 2\n[u]-1 & 1 \\
\end{bmatrix}, \qquad 
Q_{2} = 
\begin{bmatrix}
   		1 & 0 & 0 & 0 \\ 
   		0 & 1 & 0 & 0 \\ 
   		0 & 0 & 1 & \frac{z_{0}(s-1)b}{\pi^{m}s}  \\
   		0 & 0 & 0 & 1 \\ 
\end{bmatrix}. 
\end{gather*}
Then we find that 
\begin{gather*}
Q_{1}AQ_{2} = 
\begin{bmatrix}
   		0 & 0 & 0 & \frac{2z_{0}\e v^{2}}{\pi^{k+m}b} \\ 
   		0 & \frac{2z_{0}\om^{2}b}{\pi^{k+m+1}a} & 0 & 0 \\
   		0 & 0 & -\om^{2}s & 0 \\ 
\end{bmatrix}, 
\end{gather*}
which gives the elementary divisors of $A$. 
Thus \LEM{rcfr}(3) shows that 
\begin{gather*}
\alwd \cap \awc = \g \pi^{m-\la-1}v_{1} + \g \pi^{m-\la-1}v_{2} + \g \pi^{-1}v_{3}, \\ 
[M\xi/\alwd \cap \awc] = \pi^{-2(\la-m)-3}\g = \pe^{-3}[M/L \cap W]^{-2}, 
\end{gather*}
where $\tp{[v_{1}\ v_{2}\ v_{3}]} = Q_{1}\cdot\tp{[v\ \om\ u\om]}$. 

We next focus on the two lattices $(L \cap W)\xi$ and an integral lattice $N$ in $\awc$ defined by 
\begin{gather*}
N = \pi^{\la+1-m}(\alwd \cap \awc) = \gt 
\begin{bmatrix}
   		1 & 0 & 0 \\ 
   		0 & 1 & 0 \\ 
   		0 & 0 & \pi^{\la-m} \\
\end{bmatrix}
Q_{1}
\begin{bmatrix}
   		v \\ 
   		\om \\ 
   		u\om \\ 
\end{bmatrix}. 
\end{gather*} 
By \cite[(3.8)]{id}, 
$L \cap W$ can be obtained from $M = \gt \zeta_{0}$ of \REF{m20030} as follows:  
\begin{gather*}
L \cap W = \gt \alpha_{0}\zeta_{0} = \gt 
\begin{bmatrix}
   		1 & 0 & 0 \\ 
   		0 & \pi^{\la-m} & 0 \\ 
   		c_{0} & 0 & 1 \\
\end{bmatrix} 
\begin{bmatrix}
   		\sqd g \\ 
   		\pi^{m-\la}g_{2} \\ 
   		\pi^{-k}\sqd g + cg_{3} \\
\end{bmatrix} 
\end{gather*} 
with $c_{0} = -\pi^{-k}(1 + (-1)^{\ell}) \in \g$. 
Observing 
\begin{gather*}
M\xi = \gt \zeta_{0}\xi = \gt 
\begin{bmatrix}
   		-\dl s\e \gb \gbp \\ 
   		-\pi^{-\la-m}q(s-1)\sqd g\gbp \\ 
   		\om \\
\end{bmatrix}, 
\end{gather*}
we have $(L \cap W)\xi = \gt \alpha_{0}P_{0}\cdot\tp{[v\ \om\ u\om]}$ 
with the matrix $P_{0} \in GL_{3}(\g)$ given by 
\begin{gather*}
P_{0} = 
\begin{bmatrix}
   		\frac{\dl s\e}{v^{2}} & \frac{b^{2}\dl(1+v^{2})}{\pi^{k+1}v^{2}a} 
		& -\frac{2b^{2}\dl}{\pi^{k+1}v^{2}a} \\ 
   		-\frac{\pi^{\ka}z_{0}b\dl \e s_{0}}{\pi^{\ell+k}v^{2}} & 
		-\frac{z_{0}b(1+v^{2})}{\pi^{\ell+\ka}v^{2}a} &  \frac{2z_{0}b}{\pi^{\ell+\ka}v^{2}a} \\ 
   		0 & 1 & 0 \\
\end{bmatrix}. 
\end{gather*} 
Now $(L \cap W)\xi \subset N$ if and only if 
$\alpha_{0}P_{0}Q_{1}^{-1}\mathrm{diag}[1\ 1\ \pi^{m-\la}] \in M_{3}(\g)$. 
The latter matrix is of the form 
\begin{gather*}
\begin{bmatrix}
   		* & * & x \\ 
   		* & * & -\frac{\pi^{\ka}z_{0}b\dl s_{0}}{\pi^{2k+1}v^{2}}\cdot
		\frac{2b^{2}}{\pi^{\ell}sa}+\frac{2z_{0}b}{\pi^{\ell+\ka}v^{2}a} \\ 
   		* & * & c_{0}x \\
\end{bmatrix}, \qquad 
x = \frac{\dl s\e}{v^{2}}\cdot
		\frac{2\pi^{m}b^{2}}{\pi^{\ell+\ka}s\e a}-\frac{2\pi^{m}b^{2}\dl}{\pi^{\ell+\ka}v^{2}a} = 0.  
\end{gather*} 
Here all entries in $*$ belong to $\g$. 
Hence it follows that $(L \cap W)\xi \subset N$. 
Since $[M\xi/N] = \pe^{\la-m} = [M/L \cap W]$, 
$(L \cap W)\xi$ must coincide with $\pi^{\la+1-m}(\alwd \cap \awc)$. 
\\

Let $\n$ be odd. 
Then $W = F(b\dl + \sqd)g \oplus (Fg_{2} + Fg_{3})$ and 
$M = \g g_{1}g + \g \pi^{m-\la}g_{2} + \g (\pi^{-k-1}g_{1}g + cg_{3})$, 
where we put $u_{1} = \pi^{-k}(1 + b\sqd)$, 
$z = (-\pi a)^{\ell}u_{1}$, 
$z_{0} = (\pi a)^{\ell+1}$, 
$g_{1} = \pi^{-k}(b\dl + \sqd) \in \mr$, 
and $c = b^{-1}\pi^{-k-m-1}2z_{0}s\e$; 
$g_{2},\, g_{3}$ are given in \REF{g2g3} 
(see\ also\ \REF{320}). 
Here we use the letter $u_{1}$ in the present setting instead of $u$ there. 
We have then $W = Fg_{1}g \oplus Fg_{2} \oplus Fg_{3}^{\prime}$ 
with $g_{3}^{\prime} = (q(1-s))^{-1}\pi^{2m}g_{2} + 2g_{3}$. 
We set $\xi = g_{1}gg_{2}g_{3}^{\prime}$. 
This $\xi$ defines an isomorphism in \REF{is3} so that 
$\xi\xi^{*} = -\pi\dl \e a \in \pi\gx$. 
Then we find that $(L \cap W)\xi = \pi^{\la+1-m}(\alwd \cap \awc)$. 
This can be proven in the same way as in the case of even $\n$. 
\\

\textbf{Case $\dl \in \pi\gx$.} 
Our proof is similar to the previous case $\dl \in \gx$. 
When $q \in \p[Kg]$, 
it can be seen that $(L \cap W)\xi = \pi^{\la+1-m}(\alwd \cap \awc)$ if $\n$ is even 
and $(L \cap W)\xi = \pi^{\la-m}(\alwd \cap \awc)$ if $\n$ is odd. 
Here $\xi$ is a specified isomorphism in \REF{is3} so that $\xi\xi^{*} \in \pi\gx$ if $\n$ is even 
and $\xi\xi^{*} \in \gx$ if $\n$ is odd. 
When $q \not\in \p[Kg]$, 
by following steps (1) - (6) explained after \LEM{rcfr}, 
we can verify that $(L \cap W)\xi = \pi^{\la+1-m}(\alwd \cap \awc)$ if $\pe \mid 2$ 
with some $\xi$ such that $\xi\xi^{*} \in \pi\gx$ if $\n$ is even 
and $\xi\xi^{*} \in \gx$ if $\n$ is odd. 
Also if $\pe \nmid 2$, 
we have $(L \cap W)\xi = \pi^{\la+1-m}(\alwd \cap \awc)$ in a straightforward way, 
where $\xi$ satisfies $\xi\xi^{*} \in \pi\gx$ if $\n$ is even 
and $\xi\xi^{*} \in \gx$ if $\n$ is odd. 
This settles the case $t = 2$. 

\subsection{Case $t = 4$} 

Then $K = F$ and $Q(\p)$ is a division algebra. 
Because $L$ is a unique maximal lattice in the anisotropic space $(V,\, \p)$, 
so is $L \cap W$ in $W$ with respect to anisotropic $\psi$. 
Hence, 
by \THM{shi}(2), 
$L \cap W$ can be given by 
\begin{gather}
(L \cap W)\xi = 
\begin{cases}
S^{+}_{W} \cap \awc & \text{if $\n$ is even}, \\
\pi(\alwd \cap \awc) & \text{if $\n$ is odd}. 
\end{cases} \label{rec4}
\end{gather}
Here $\xi$ is an isomorphism of \REF{is3} so that 
$\xi\xi^{*} \in \gx$ or $\xi\xi^{*} \in \pi\gx$ according as $\n$ is even or odd, 
and $S^{+}_{W}$ is a unique maximal order in the division algebra $\aw$. 
Clearly \REF{rec4} proves the case of odd $\n$. 
Hereafter in the case $t = 4$ we assume that $\n$ is even. 
Put $B = Q(\p)$, $K_{1} = F(\sq)$, and $q = \pi^{\n}\e$ with $\e \in \gx$; 
also put $2\g = \pe^{\ka}$. 

Assume that $K_{1} \ne F$. 
We employ the setting in the case of {\S}2.4 where 
$t = 4$, $q_{0}( = \e) \in \gx$, and $K_{1} \ne F$. 
Then $(V,\, \p) = (B,\, \beta)$ and $B = K_{1} + K_{1}\om$ 
with $\om^{2} = c \in \gx \cup \pi\gx$ given there. 
In view of \REF{idrc} we may identify $W$ with that of \REF{w4}. 
Let $1_{B}$ be the identity element of $B$ and put $v = \sqrt{-\e}$. 

Suppose $(K_{1}/\pe) = 0$. 
Then $\pe \mid 2$. 
We use the same notation as in the case $(K_{1}/\pe) = 0$ in {\S}2.4. 
Since $W = F1_{B} \oplus F\om \oplus Fv\om$, 
we set $\xi = 1_{B}\om(v\om)$. 
Then $\xi$ gives an isomorphism as in \REF{is3} and $\xi\xi^{*} = \e c^{2} \in \gx$. 
By \REF{lw40}, 
$(L \cap W)\xi$ is given by 
\begin{gather*}
(L \cap W)\xi = \gt\zeta_{0} = \gt 
\begin{bmatrix}
   		\om(v\om) \\ 
   		\pi^{-k}c(\ib(v\om) - \e b\ib\om) \\ 
   		\pi^{k+1-\ka}(\om(v\om) + c\ib(v\om)) \\
\end{bmatrix}. 
\end{gather*}
On the other hand, 
by \REF{alw40}, 
$\alw$ is expressed as follows: 
\begin{gather*}
A^{+}(L \cap W) = \gf\zeta = \gf 
\begin{bmatrix}
   		1 \\ 
   		\pi^{-k}(\ib\om + b\ib(v\om)) \\ 
   		\pi^{k+1-\ka}(1 + \ib\om) \\ 
   		-\pi^{1-\ka}(c + \ib\om + b\ib(v\om) + b\om(v\om)) \\
\end{bmatrix}. 
\end{gather*}
Then $\alwd = \gf(2\n_{0})^{-1}\zeta$. 
Here $\n_{0}$ is the matrix that represents the norm form $\n$ of $\aw$ with respect to $\zeta$, 
which is given by 
\begin{gather*}
\n_{0} = 
\begin{bmatrix}
   		1 & 0 & \pi^{k+1-\ka} & -\pi^{1-\ka}c \\ 
   		0 & \pi ac & -\pi^{1-\ka}c & -\pi^{k+2-\ka}ac \\ 
  		\pi^{k+1-\ka} & -\pi^{1-\ka}c & \pi^{2-2(\ka-k)}(1-c) & 0 \\
   		-\pi^{1-\ka}c & -\pi^{k+2-\ka}ac & 0 & \pi^{3-2(\ka-k)}ac(1-c) \\ 
\end{bmatrix}. 
\end{gather*}
By straightforward computations we have 
\begin{gather*}
\zeta_{0} = -\frac{1}{b} 
\begin{bmatrix}
   		c & \pi^{k} & 0 & \pi^{\ka-1} \\ 
   		\pi^{k+1}ac & -c & -\pi^{\ka}ac & 0 \\ 
   		0 & \pi^{2k+1-\ka}(1-c) & c & \pi^{k} \\
\end{bmatrix}\zeta. 
\end{gather*}
From these $(L \cap W)\xi$ can be expressed by means of $\eta = (2\n_{0})^{-1}\zeta$ as follows: 
\begin{gather*}
(L \cap W)\xi = \gt 
\begin{bmatrix}
   		0 & 0 & 0 & -2\pi^{1-\ka}\e bc^{2} \\ 
   		0 & 0 & 2\pi^{1-\ka}\e bc^{2} & 0 \\ 
   		0 & -2\pi^{1-\ka}\e bc^{2} & 0 & 0 \\
\end{bmatrix}\eta. 
\end{gather*}
Employing this, 
we find that $\alwd \cap \awc = \pi^{-1}\gt\zeta_{0}$, 
and therefore $(L \cap W)\xi = \pi(\alwd \cap \awc)$. 

Similarly for $(K_{1}/\pe) = -1$, 
we see that 
$(L \cap W)\xi = \pi(\alwd \cap \awc)$ with some $\xi$ satisfying $\xi\xi^{*} \in \gx$.

Assume that $K_{1} = F$. 
We employ the setting in the case where $q_{0} \in \gx$ and $K_{1} = F$ in {\S}2.4. 
Then $(V,\, \p) = (B,\, -\beta)$, 
$B = J + J\om$ with $\om^{2} = \pi$, 
and $J = F(\sqrt{s})$ is an unramified quadratic field of $F$ 
with $s \in \gx$ given there. 
We may identify $W$ with $F\sqrt{s} \oplus J\om$. 
We set $\xi = \pi^{-1}\sqrt{s}\om(\sqrt{s}\om)$. 
Then $\xi$ gives an isomorphism as in \REF{is3} and $\xi\xi^{*} = -s^{2}$. 

If $\pe \nmid 2$, 
by \REF{lw41}, 
$\{\sqrt{s},\, \om,\, \sqrt{s}\om\}$ is an orthogonal $\g$-basis of $L \cap W$ with respect to $\psi$. 
Then straightforward calculations show that $(L \cap W)\xi = \pi(\alwd \cap \awc)$. 

If $\pe \mid 2$, 
we can put $s = (1 + \pi^{2\ka}a)b^{-2}$ with $a,\, b \in \gx$. 
Let $\alw = \gf\zeta$ with $\zeta$ given by \REF{alw41}. 
The matrix $\n_{0}$ representing $\n$ with respect to $\zeta$ is given by 
\begin{gather*}
\n_{0} = 
\begin{bmatrix}
   		1 & 0 & 0 & \pi^{1-\ka} \\ 
   		0 & \pi s & \pi^{1-\ka}s & 0 \\ 
  		0 & \pi^{1-\ka}s & -\pi sa & 0 \\
   		\pi^{1-\ka} & 0 & 0 & -\pi^{2}a \\ 
\end{bmatrix}. 
\end{gather*}
Then together with \REF{lw41}, 
$(L \cap W)\xi$ can be expressed as 
\begin{gather*}
(L \cap W)\xi = \gt 
\begin{bmatrix}
   		0 & 0 & 0 & -2\pi^{1-\ka}s^{2}b \\ 
   		0 & 0 & 2\pi^{1-\ka}s^{2}b & 0 \\ 
   		0 & -2\pi^{1-\ka}s^{2}b & 0 & 0 \\
\end{bmatrix}(2\n_{0})^{-1}\zeta. 
\end{gather*}
From this we have $(L \cap W)\xi = \pi(\alwd \cap \awc)$. 
This settles the case $t = 4$. 
\\

Let us complete the proof of \THM{th2}. 
In all the cases needed in the proof we have seen that 
\begin{gather}
(L \cap W)\xi = \pi^{\mu}(\alwd \cap \awc) \label{src}
\end{gather}
with a specified isomorphism $\xi$ in \REF{is3} and $\mu = \la - m$ or $\la + 1 - m$. 
In each case observe that 
$\mu = \la + 1 - m$ if and only if $D_{Q(\psi)} = \pe$ or if $D_{Q(\psi)} = \g$ and $\mathfrak{a} = \pe$ 
with $\mathfrak{a}$ in the statement of \THM{th2}. 
Also, 
$\aw$ is a division algebra if and only if $t = 4$ or if $t = 2$ and $q \not\in \p[Kg]$. 
In view of these, 
\REF{src} can be given by 
$(L \cap W)\xi = \mathfrak{c}\pe^{\la-m}(\alwd \cap \awc)$ 
with $\mathfrak{c}$ in the statement. 

Let $\zeta$ be an arbitrary isomorphism in \REF{is3} such that $\zeta\zeta^{*} \in \gx \cup \pi\gx$. 
Notice that $F(\sqrt{-\zeta\zeta^{*}})$ is the discriminant field of $\psi$. 
Then $\xi^{-1}\zeta \in F^{\times}$ as mentioned in {\S}1.2. 
Because $\xi\xi^{*} \in \gx \cup \pi\gx$ for every case, 
$\xi^{-1}\zeta$ belongs to $\gx$. 
Hence we have 
$(L \cap W)\zeta = \mathfrak{c}\pe^{\la-m}(\alwd \cap \awc)\xi^{-1}\zeta 
	= \mathfrak{c}[M/L \cap W](\alwd \cap \awc)$, 
where $M$ is a maximal lattice in $W$ with respect to $\psi$. 
This completes the proof of \THM{th2}.  

\section{Theorem on $\lw$ in the global case}

\begin{thm} \label{th1}
Let $(V,\, \p)$ be a quaternary quadratic space over a number field $F$. 
For $h \in V$ such that $\p[h] = q \ne 0$, 
put $W = (Fh)^{\perp}$ and let $\psi$ be the restriction of $\p$ to $W$. 
Fix an isomorphism $\xi$ of $(W,\, \psi)$ onto $(A^{+}(W)^{\circ},\, (\xi\xi^{*})^{-1}\nu^{\circ})$ 
as stated in \REF{is3}. 
Put $\xi\xi^{*}\g = \mathfrak{a}\mathfrak{r}^{2}$ 
with a squarefree integral ideal $\mathfrak{a}$ and a $\g$-ideal $\mathfrak{r}$ of $F$. 
Then, 
for a $\g$-maximal lattice $L$ in $V$ with respect to $\p$, 
$L \cap W$ can be given by 
\begin{gather}
(L \cap W)\xi = \mathfrak{r}\mathfrak{c}D_{Q(\psi)}[M/L \cap W]\cdot
	\{A^{+}(L \cap W)\widetilde{\,} \cap A^{+}(W)^{\circ}\}. \label{rec}
\end{gather}
Here $\mathfrak{c}$ is the product of the prime ideals of $\mathfrak{a}$ 
that are prime to the discriminant $D_{Q(\psi)}$ of $A^{+}(W)$ 
and $M$ is a $\g$-maximal lattice in $W$ with respect to $\psi$. 
\end{thm}
\begin{proof}
Let $K = F(\sqd)$ be the discriminant field of $\p$. 
We first observe that $\dl q\g = \mathfrak{a}(\mr a)^{2}$ with some $a \in F^{\times}$ 
because $\dl q \in \xi\xi^{*}F^{\times 2}$. 
\THM{th2} combined with the localization at $v \in \mathbf{h}$ proves that 
$(L \cap W)_{v}\zeta_{v} = (\mathfrak{c}D)_{v}[M/L \cap W]_{v}(\alwd_{v} \cap \awc_{v})$, 
where $D$ is the discriminant of $\aw$ and 
$\zeta_{v}$ is an isomorphism in \REF{is3} such that $\zeta_{v}\zeta_{v}^{*}\g_{v} = \mathfrak{a}_{v}$. 
Then $\xi = c\zeta_{v}$ with some $c \in F_{v}^{\times}$. 
Since $c^{2}\mathfrak{a}_{v} = \xi\xi^{*}\g_{v} = \mathfrak{a}_{v}\mr_{v}^{2}$, 
we have $\zeta_{v}^{-1}\xi\g_{v} = \mr_{v}$. 
Thus, through the $\xi$, 
$(L \cap W)_{v}$ can be given by 
\begin{eqnarray*}
(L \cap W)_{v}\xi &=& (\mathfrak{c}D)_{v}[M/L \cap W]_{v}\cdot\{\alwd_{v} \cap \awc_{v}\}\zeta_{v}^{-1}\xi \\
&=& \mr_{v}(\mathfrak{c}D)_{v}[M/L \cap W]_{v}\cdot\{\alwd_{v} \cap \awc_{v}\} 
\end{eqnarray*}
for every $v \in \mathbf{h}$. 
This determines \REF{rec}. 
Our theorem is thereby proved. 
\end{proof}

\begin{cor} \label{co1}
Let the notation be the same as in \THM{th1}. 
Put $\mathfrak{o} = A^{+}(L \cap W)$. 
Then the following assertions hold: 
\begin{enumerate}
\item The $SO^{\psi}$-genus of $L \cap W$ is determined by the genus of $\mathfrak{o}$ which is defined by the set 
	$\{\alpha^{-1}\mathfrak{o}\alpha \mid \alpha \in A^{+}(W)^{\times}_{\mathbf{A}}\}$. 
\item $C(L \cap W) = \tau(T(\mathfrak{o}))$ and $\Gamma(L \cap W) = \tau(\Gamma^{*}(\mathfrak{o}))$. 
\item The map $N \mapsto A^{+}(N)$ gives a bijection of the set of $SO^{\psi}(W)$-classes 
	in the genus of $\lw$ onto the set of isomorphism classes in the genus of $\mathfrak{o}$. 
\item The class number of the genus of $L \cap W$ with respect to $SO^{\psi}(W)$ 
	equals the type number of $\mathfrak{o}$ whose discriminant is $q[\widetilde{L}/L](2\p(h,\, L))^{-2}$. 
\end{enumerate}
\end{cor}
\begin{proof}
Assertion (1) is verified from \REF{rec}. 
To prove the first assertion of (2), 
we have only to verify that $C(L \cap W) = \tau(T(\mo))$ in the local situation. 
Suppose that $(L \cap W)\tau(\alpha) = L \cap W$ for $\alpha \in G^{+}(W) = \aw^{\times}$. 
Then $\alpha\mathfrak{o}\alpha^{-1} = A^{+}(\alpha(L \cap W)\alpha^{-1}) = \mathfrak{o}$, 
and hence $C(L \cap W) \subset \tau(T(\mo))$. 
Conversely, 
if $\alpha \in T(\mo)$, 
then 
\begin{gather*}
2\n(\alpha^{-1}x\alpha,\ \mo) = 2\n(x,\ \alpha\mo\alpha^{-1}) = 2\n(x,\ \mo) \subset \g 
\end{gather*}
for every $x \in \widetilde{\mo}$. 
Hence $\alpha^{-1}\widetilde{\mo}\alpha \subset \widetilde{\mo}$. 
A similar argument proves $\alpha^{-1}\widetilde{\mo}\alpha = \widetilde{\mo}$. 
Thus by \REF{rec}, 
$(L \cap W)\tau(\alpha) = \mathfrak{r}\mathfrak{c}D_{Q(\psi)}[M/L \cap W]
	(\alpha^{-1}\widetilde{\mo}\alpha \cap \awc)\xi^{-1} = L \cap W$. 
This shows $\tau(T(\mo)) \subset C(L \cap W)$. 
Thus we obtain the desired fact. 
In a similar way we have the second assertion of (2). 

The homomorphism $\tau$ in \REF{tau} gives a bijection of 
$T(\mathfrak{o})\setminus \aw^{\times}_{\mathbf{A}}/$ $\aw^{\times}$ 
onto $C(L \cap W)\setminus SO^{\psi}(W)_{\mathbf{A}}/SO^{\psi}(W)$, 
which proves (3). 
This assertion can also be verified by employing \REF{rec} in a straightforward way. 
Notice that every lattice $N$ belonging to the genus of $\lw$ is integral with respect to $\psi$. 
Assertion (4) is a consequence from (3). 
\end{proof}
This corollary proves \cite[Theorem 2.2]{qeo}. 
Also \LEM{lalw} together with \REF{od} proves \cite[Lemma 2.1]{qeo} by localization. 
Applying that result and \THM{th1} to the case of $F=\mathbf{Q}$, 
we have \cite[Lemma 2.1]{pr}.

Manabu\ Murata 

College of Science and Engineering 

Ritsumeikan University 

Kusatsu,\ Shiga 525-8577 

Japan 

murata31@pl.ritsumei.ac.jp 

\end{document}